\theoremstyle{plain} 
\newtheorem{theorem}{Theorem}[section]
\newtheorem{proposition}[theorem]{Proposition}
\DeclareMathOperator*{\argmin}{arg\,min}                   
\renewcommand{\t} {^{\top}}                                
\newcommand{\bmx}{\left[ \begin{array}}
\newcommand{\emx}{\end{array} \right]}
\newcommand{\bfPsi}{{\boldsymbol{\Psi}}}
\newcommand{\bfA}{{\bf A}}
\newcommand{\bfH}{{\bf H}}
\newcommand{\bfI}{{\bf I}}
\newcommand{\bfL}{{\bf L}}
\newcommand{\bfM}{{\bf M}}
\newcommand{\bfQ}{{\bf Q}}
\newcommand{\bfR}{{\bf R}}
\newcommand{\bfT}{{\bf T}}
\newcommand{\bfU}{{\bf U}}
\newcommand{\bfV}{{\bf V}}
\newcommand{\bfZ}{{\bf Z}}
\newcommand{\bfb}{{\bf b}}
\newcommand{\bfd}{{\bf d}}
\newcommand{\bfe}{{\bf e}}
\newcommand{\bfr}{{\bf r}}
\newcommand{\bfs}{{\bf s}}
\newcommand{\bfu}{{\bf u}}
\newcommand{\bfv}{{\bf v}}
\newcommand{\bfw}{{\bf w}}
\newcommand{\bfx}{{\bf x}}
\newcommand{\bfy}{{\bf y}}
\newcommand{\bfz}{{\bf z}}
\newcommand{\bfzero}{{\bf0}}
\newcommand{\calR}{\mathcal{R}}
\newcommand{\bbR}{\mathbb{R}}
\newcommand{\true}{\mathrm{true}}
\newdimen\iwidth
\newdimen\iheight
\newcommand{\norm}[2][]{\left\Vert#2\right\Vert_{#1}}
\newcommand{\diag}{\mathrm{diag}}
\newcommand{\mt} {^{-\top}}                                
\newcommand{\R}{\mathbb{R}}
\newcommand{\TheTitle}{Flexible Krylov methods for {$\ell_\MakeLowercase{p}$} regularization}
\title{{\TheTitle}}
\author{
  Julianne Chung
  \thanks{Department of Mathematics, Computational Modeling and Data Analytics Division, Academy of Integrated Science, Virginia Tech, Blacksburg, VA, USA
\newline \hspace*{10ex}
    \Letter \ \texttt{jmchung@vt.edu} \ \ \Mundus \ {www.math.vt.edu/people/jmchung/}}
  \and
  Silvia Gazzola
  \thanks{Department of Mathematical Sciences, University of Bath, United Kingdon
\newline \hspace*{10ex}
    \Letter \ \texttt{{S.Gazzola@bath.ac.uk} \ \ \Mundus \ {http://people.bath.ac.uk/sg968/}}}
}
\begin{document}
\maketitle
\begin{abstract}
In this paper we develop flexible Krylov methods for efficiently computing regularized solutions to large-scale linear inverse problems with an $\ell_2$ fit-to-data term and an $\ell_p$ penalization term, for $p\geq 1$.  First we approximate the $p$-norm penalization term as a sequence of $2$-norm penalization terms using adaptive regularization matrices in an iterative reweighted norm fashion, and then we exploit flexible preconditioning techniques to efficiently incorporate the weight updates. To handle general (non-square) $\ell_p$-regularized least-squares problems, we introduce a flexible Golub-Kahan approach and exploit it within a Krylov-Tikhonov hybrid framework. The key benefits of our approach compared to existing optimization methods for $\ell_p$ regularization are that efficient projection methods replace inner-outer schemes and that expensive regularization parameter selection techniques can be avoided. Theoretical insights are provided, and numerical results from image deblurring and tomographic reconstruction illustrate the benefits of this approach, compared to well-established methods.
Furthermore, we show that our approach for $p=1$ can be used to efficiently compute solutions that are sparse with respect to some transformations.
\end{abstract}

\textbf{Keywords}: $\ell_p$ regularization, sparsity reconstruction, iterative reweighted norm, flexible Golub-Kahan, hybrid regularization, image deblurring, tomographic reconstruction.

\section{Introduction} \label{sec:introduction}
Inverse problems are prevalent in many important applications, ranging from biomedical to geophysical imaging, and solutions must be computed reliably and efficiently. In this work we are interested in linear inverse problems of the form
\begin{equation}\label{linsys}
 \bfb = \bfA \bfx_\true + \bfe\,,
\end{equation}
where $\bfb \in \bbR^m$ is the observed data, $\bfA \in \bbR^{m \times n}$ models the forward process, $\bfx_\true \in \bbR^n$ is the desired solution, and $\bfe\in \bbR^m$ represents noise or errors in the observation.  Due to the ill-posedness of the underlying problem \cite{hansen2010discrete},
regularization should be applied to recover a meaningful approximation of $\bfx_\true$ in (\ref{linsys}).
In this paper, we are interested in problems of the form
\begin{equation}
	\label{eq:pnormAb}
 \min_\bfx \norm[2]{\bfA \bfx - \bfb}^2 + \lambda \norm[p]{\bfPsi\bfx}^p\,,
\end{equation}
where $\norm[p]{\cdot}$ for $ p\geq 1$ is the vectorial $p$-norm, $\lambda >0$ is a regularization parameter, and $\bfPsi\in\bbR^{n\times n}$ is a nonsingular matrix.
For $p=2$ and $\bfPsi = \bfI$,~\eqref{eq:pnormAb} is the standard Tikhonov regularization problem, and many efficient techniques, including hybrid iterative methods, have been proposed, see,  e.g.,~\cite{chung2015hybrid,gazzola2015survey,o1981bidiagonalization,kilmer2001choosing}.  However, optimization problems (\ref{eq:pnormAb}) for $p\neq 2$ can be significantly more challenging.  For example, for $p=1$, the so-called $\ell_1$-regularized problem suffers from non-differentiability at the origin; moreover, in some situations, one may wish to consider $0<p<1$, which results in a nonconvex optimization problem, see, e.g., \cite{huang2017majorization,lanza2015generalized,lanza2017nonconvex}.  In this paper, we will focus on $p\geq1$, and henceforth we will refer to problem (\ref{eq:pnormAb}) with $\bfPsi=\bfI$ as an ``$\ell_p$-regularized problem'' and problem (\ref{eq:pnormAb}) with $\bfPsi\neq \bfI$ will be dubbed the ``transformed $\ell_p$-regularized'' problem.

Typically the transformed $\ell_p$-regularized problem arises in cases where sparsity in some frequency domain (e.g., in a wavelet domain) is desired. Depending on the application, a sparsity transform may be included in both the fit-to-data and the regularization term.  This was considered in \cite{belge2000wavelet} for image deblurring problems, where the resulting minimization problem was solved with an inner-outer iteration scheme.

Most of the previously developed methods for $\ell_p$ minimization utilize nonlinear optimization schemes or iteratively reweighted optimization schemes, which can get very expensive due to inner-outer iterations \cite{arridge2014iterated,renaut2017hybrid,wohlberg2008lp}.  Other popular approaches such as the split Bregman method \cite{goldstein2009split}, separable approximations \cite{wright2009sparse}, and accelerations of the iterative shrinkage thresholding algorithms \cite{beck2009fast}, are fast alternatives, but a main disadvantage is that the regularization parameter must be selected a priori and can be a cumbersome task. Krylov methods, on the other hand, have nice convergence and regularizing properties, so there have been recent efforts to exploit Krylov methods to solve the $\ell_p$-regularized problem.  For example, \cite{huang2017majorization,lanza2015generalized} considered generalized Krylov methods for $\ell_p-\ell_q$ minimization, and Krylov methods based on the flexible Arnoldi algorithm were considered in \cite{gazzola2014generalized,saibaba2013flexible,simoncini2002flexible}.
Our proposed methods are mostly related to the latter, which
computes approximate solutions to the $\ell_p$-regularized problem when $\bfA$ is square.  Below we outline the main distinctions and contributions of our work.

In this paper, we propose new iterative hybrid methods based on a flexible Golub-Kahan decomposition to solve $\ell_p$-regularized problems~\eqref{eq:pnormAb}, where flexible preconditioning techniques are used to build appropriate approximation subspaces for the solution.  In particular, we describe two methods, namely flexible LSQR and flexible LSMR, and show how Tikhonov regularization can be used to solve the projected problem, where the properties of the matrices associated to the flexible Golub-Kahan decomposition are exploited for efficient regularization parameter selection (in a hybrid fashion).  We underline that methods based on the flexible Golub-Kahan algorithm are matrix-free, i.e., they only require accessing $\bfA$ and $\bfA\t$ via matrix-vector multiplication. Furthermore, we describe a way to incorporate regularization terms expressed as the $\ell_p$-norm of the transformed solution within the flexible schemes (based on both the Arnoldi and the Golub-Kahan decompositions), i.e., to deal with the transformed $\ell_p$-regularized problem, $p\geq 1$.

One of the first major contributions, compared to~\cite{gazzola2014generalized}, is that these methods can be used to solve problems with general (e.g., non-square) coefficient matrix $\bfA.$ Second, we provide theoretical results that show optimality properties for the flexible approaches and show that in exact arithmetic, flexible LSMR iterates are equivalent to flexible Arnoldi-Tikhonov iterates on the normal equations. Third, contrary to classical Krylov-Tikhonov methods \cite{gazzola2015survey}, which can handle penalization terms evaluated in the 2-norm, the new methods can approximate penalization terms evaluated in the sparsity-inducing 1-norm and can include an invertible transformation. In this way we generalize the flexible Arnoldi decomposition proposed in \cite{gazzola2014generalized}, as well as the flexible Golub-Kahan decomposition derived in this paper.
Numerical comparisons to well-established $\ell_1$ regularization methods reveal that the proposed strategies provide an easy-to-use approach for computing reconstructions with similar properties, but with two significant benefits: firstly, the regularization parameters can be selected automatically thanks to the hybrid framework; secondly, information about the current solution is incorporated into the solution process as soon as it becomes available, with potentially great computational savings with respect to methods involving inner-outer iterations.

The paper is organized as follows.  In Section~\ref{sec:generalframework} we review the ideas underlying the iteratively reweighted norm (IRN) approach for $\ell_p$ regularization and briefly review the flexible Arnoldi-Tikhonov appraoch. In Section~\ref{sec:flexible} we derive the flexible Golub-Kahan decomposition, leading to the introduction of the new flexible LSQR and flexible LSMR algorithms; hybrid approaches based on flexible LSQR and flexible LSMR are addressed, with a particular emphasis on the choice of regularization term and regularization parameter. Theoretical results are provided.  In Section~\ref{sec:sparstransf} we describe how a sparsity transform can be handled within hybrid schemes based on the flexible Arnoldi and Golub-Kahan algorithms, analyzing how the approximation subspaces for the solution are modified by incorporating reweightings and sparsity transforms. Numerical results are presented in Section~\ref{sec:numerics}, and conclusions and future work are provided in Section~\ref{sec:conclusions}.

\section{Background on iteratively reweighted and flexible methods for $\ell_p$ regularization}\label{sec:generalframework}

A typical strategy for solving the $\ell_p$-regularized inverse problem is the iteratively reweighted norm (IRN) algorithm \cite{IRNekki,wohlberg2008lp}. This approach requires solving a sequence of reweighted, penalized least-squares problems where the weights change at each iteration.  When dealing with large systems, each least-squares problem is solved by an iterative method, so that an inner-outer iteration scheme is naturally established. In the following we use the acronym IRN to indicate a wide class of algorithms that leverage (outer) reweighing together with an (inner) iterative scheme. IRN methods are also closely related to the iteratively reweighted least squares (IRLS) methods \cite[Chapter 4]{LS}. Since IRN methods can get very costly, another common approach is to use iterative shrinkage thresholding algorithms \cite{beck2009fast}, where a two-step process is used.

Many of these methods assume that a good value for the regularization parameter is available \emph{a priori}, but oftentimes this is not the case.  And although there have been some recent works on selecting regularization parameters for $\ell_1$ regularization, e.g., \cite{giryes2011projected}, these can still be quite costly for very large problems. Selecting regularization parameters for $\ell_p$-regularized problems remains a tricky, yet crucial, task.
For the special case where $p=2$, significant works on hybrid methods have enabled successful simultaneous estimation of the regularization parameter and computation of large-scale reconstructions, see, e.g., \cite{kilmer2001choosing,renaut2017hybrid}.  In these hybrid frameworks, the problem is projected onto Krylov subspaces of increasing size and the task of choosing the regularization parameter is shifted to the smaller, projected problem.  However, such approaches have not been fully investigated for general $\ell_p$-regularized problems. The flexible hybrid framework for $\ell_p$-regularized problems that we describe in Section~\ref{sec:flexible} incorporates simultaneous parameter selection and is based on the IRN reformulation.

As described in \cite{wohlberg2008lp}, the first step toward an IRN approach is to define a sequence of appropriate regularization operators to break the $\ell_p$-regularized problem into a sequence of 2-norm problems,
\begin{equation}\label{eq:2normL}
 \min_\bfx \norm[2]{\bfA \bfx - \bfb}^2 + \lambda \norm[2]{\bfL(\bfx)\bfx}^2\,,
\end{equation}
where
\begin{equation}\label{eq:weights1}
\bfL(\bfx) = \diag\left((|[\bfx]_i|^{\frac{p-2}{2}})_{i=1,\ldots,n}\right)\,.
\end{equation}
Here $[\bfx]_i$ is the $i$th entry of vector $\bfx$. We remark that, when $p<2$, care is needed when defining (\ref{eq:weights1}), because division by 0 may occur if $[\bfx]_i=0$ for some $i=1,\dots,n$. To remedy to this potential issue, small thresholds $\tau_1,\tau_2>0$ are set and the matrix in (\ref{eq:weights1}) is redefined as
\begin{equation}\label{eq:weights2}
\bfL(\bfx) = \diag((f_{\tau}([|\bfx|]_i)^{\frac{p-2}{2}})_{i=1,\ldots,n})\,,\;\mbox{where}\;
f_{\tau}([|\bfx|]_i)=\begin{cases}
[|\bfx|]_i & \mbox{if $[|\bfx|]_i\geq\tau_1$}\\
\tau_2 & \mbox{if $[|\bfx|]_i<\tau_1$}
\end{cases}.
\end{equation}
Note that taking $\tau_2<\tau_1$ enforces some additional sparsity in $f_{\tau}([|\bfx|]_i)$. In the case \linebreak[4]$p=1$, the IRN approach obviously reduces the $\ell_1$-regularized problem~\eqref{eq:pnormAb} to a sequence of least-squares problems involving a weighted $\ell_2$ norm.  That is,
\begin{equation}\label{eq:norm1approx}
 \norm[1]{\bfx} \approx \norm[2]{\bfL(\bfx) \bfx}^2\,,
\end{equation}
where $\bfL(\bfx) = \diag(1/\sqrt{f_\tau(|\bfx|}))$, $f_\tau(\cdot)$ is defined as in (\ref{eq:weights2}), and the square root and absolute value operations are applied component-wise.
We remark that problem (\ref{eq:2normL}) can be equivalently reformulated as
\begin{equation}\label{eq:2normALb}
 \min_{\widehat\bfx} \norm[2]{\bfA \bfL(\bfx)^{-1} \widehat\bfx - \bfb}^2 + \lambda \norm[2]{\widehat\bfx}^2\,,
\end{equation}
where $\widehat \bfx = \bfL(\bfx) \bfx$. This transformation into standard form is computationally very convenient, as it only amounts to the inversion of a diagonal matrix.

Since considering directly (\ref{eq:2normL}) or (\ref{eq:2normALb}) is not possible in real problems where the true $\bfx$ is not available, and since we want to avoid nonlinearities, we follow the common practice of approximating the matrix $\bfL(\bfx)$ by the matrix
$\bfL_k = \bfL(\bfx_k)$, where $\bfx_k$ is an approximation of the solution at the $k$th iteration. The IRN method proposed in \cite{wohlberg2008lp} prescribes to apply, at the $k$th outer iteration, the conjugate gradient (CG) method to solve the normal equations associated to (\ref{eq:2normL}), i.e.,
\begin{equation}\label{eq:2normL_NE}
(\bfA\t\bfA + \lambda\bfL_k\t\bfL_k)\bfx = \bfA\t\bfb\,,\quad \bfL_k=\bfL(\bfx_k)\,.
\end{equation}
Also preconditioned CG (PCG) can be applied at the $k$th outer iteration of IRN to solve the normal equations associated to (\ref{eq:2normALb}), i.e.,
\begin{equation}\label{eq:2normALb_NE}
(\bfL_k\mt\bfA\t\bfA\bfL_k^{-1} + \lambda\bfI)\widehat\bfx = \bfL_k\mt\bfA\t\bfb\,,\quad \bfL_k^{-1}\widehat\bfx=\bfx,
\quad\bfL_k=\bfL(\bfx_k)\,.
\end{equation}
We refer to this approach as preconditioned IRN (PIRN) method, which is similar in essence to the inner-outer scheme proposed in \cite{arridge2014iterated} to handle total variation regularization. In both equations (\ref{eq:2normL_NE}) and (\ref{eq:2normALb_NE}), $\bfx_k$ is the approximation of the solution obtained at the $(k-1)$st outer iteration. We emphasize that the term ``preconditioned'' is used in a somewhat nonconventional way: the ``preconditioners'' considered here are not aimed at accelerating the convergence of the iterative solvers, but rather at enforcing some specific regularity into the associated solution subspace. Transformed $\ell_p$-regularized problems can be suitably expressed in this framework too, as we will explain in Section~\ref{sec:sparstransf}.
We stress once more that, in the IRN framework, the matrix $\bfL = \bfL_k$
changes at each outer iteration, resulting in a sequence of least-squares problems to be solved.
A more efficient alternative that is applied directly to problem (\ref{eq:2normALb}) and that exploits flexible preconditioning to bypass inner-outer iterative schemes is summarized below.

\paragraph{Generalized Arnoldi-Tikhonov approaches}
For completeness, we provide a brief overview of the generalized Arnoldi-Tikhonov (GAT) \cite{gazzola2014generalized} approach to solve problem (\ref{eq:2normALb}) for $\bfA \in \bbR^{n \times n}$ and for changing preconditioners $\bfL_k$.
Consider the flexible preconditioned Arnoldi algorithm where, at the $k$th iteration, we have
\begin{equation}\label{eq:flexiArn}
\bfA \widehat\bfZ_k = \widehat\bfV_{k+1} \widehat\bfH_k
\end{equation}
where $\widehat\bfH_k \in \bbR^{(k+1) \times k}$ is upper Hessenberg, $\widehat\bfV_k = \begin{bmatrix}
 \widehat \bfv_1 & \ldots & \widehat \bfv_k
\end{bmatrix}$ contains orthonormal columns, and $\widehat \bfZ_k = \begin{bmatrix}
 \bfL_1^{-1} \widehat \bfv_1 & \ldots & \bfL_k^{-1} \widehat \bfv_k
\end{bmatrix} \in \bbR^{n \times k}$. If we are given an initial guess $\bfx_0$ for the solution, then $\widehat \bfv_1 = \bfr_0/\norm[2]{\bfr_0}$.  We also note that, if the preconditioner is fixed along the iterations ($\bfL_i=\bfL$, $i=1,\dots,k$), then $\widehat \bfZ_k=\bfL^{-1}\widehat \bfV_k$, i.e., decomposition (\ref{eq:flexiArn}) reduces to the one associated to the standard right-preconditioned GMRES. The GAT method computes approximate solutions of the form $\bfx_k = \bfx_0+\widehat \bfZ_k \widehat \bfy_k$ where
\begin{equation}
  \label{eq:GAT}
\widehat \bfy_k = \argmin_{\bfy} \norm[2]{\widehat \bfH_k \bfy - \norm[2]{\bfr_0} \bfe_1}^2 + \lambda \norm[2]{\bfy}^2.
\end{equation}
For $\lambda = 0,$ we have the flexible GMRES (FGMRES) method \cite[Chapter 9]{saad2003iterative}. The main advantages of this approach are that \emph{only one} solution subspace needs to be generated (versus multiple solves in IRN), one matrix-vector multiplication with $\bfA$ is required at each iteration (versus one with $\bfA$ and one with $\bfA\t$ in CGLS), and the regularization parameter and stopping iteration can be computed automatically by exploiting the hybrid framework. In \cite{gazzola2014generalized}, the GAT method and its variants were used to efficiently compute approximate solutions to $\ell_1$-regularized problems, but a limitation is that this method only works for square problems.  A na\"{i}ve extension of the GAT method to general least-squares problems by applying the flexible Arnoldi algorithm to the normal equations is not recommended, due to known complications of forming and working with the normal equations \cite{golub2012matrix}.  In the following section, we exploit some new computational tools from numerical linear algebra, namely the flexible Golub-Kahan method, so that we can work directly with the residual from the original least-squares problem (\ref{eq:2normALb_NE}).

\section{Flexible Golub-Kahan hybrid methods}
\label{sec:flexible}
In this section, we describe flexible hybrid approaches based on the flexible Golub-Kahan process for solving the variable-preconditioned Tikhonov problem,
\begin{equation}
 \min_\bfx \norm[2]{\bfA \bfx - \bfb}^2 + \lambda \norm[2]{\bfL_k \bfx}^2\,,
\end{equation}
where $\bfL_k$ may change at each iteration.
Similarly to the GAT method, the flexible Golub-Kahan hybrid methods generate a basis for the solution (which takes into account a changing preconditioner in a flexible framework) and compute an approximate solution to the inverse problem by solving an optimization problem in the projected subspace (where regularization can be done efficiently and with automatic regularization parameter selection for the projected problem).
These iterative approaches are ideal for problems where $\bfA$ and $\bfA\t$ can be accessed only by matrix-vector multiplication, where only a few basis vectors are required to obtain a good solution, and where the regularization parameter is not known a priori.

\subsection{Incorporating weights: a flexible Golub-Kahan decomposition}
\label{sub:FGK}
To be able to incorporate a changing preconditioner, we use a flexible variant of the Golub-Kahan bidiagonalization (GKB) to generate a basis for the solution.  We call this the \emph{flexible Golub-Kahan} (FGK) process and mention that it is closely related to the inexact Lanczos process \cite{van2004inexact,simoncini2007recent}.  Given $\bfA, \bfb,$ and changing preconditioners $\bfL_k$, the FGK iterative process generates vectors $\bfz_k$, $\bfv_k$, and $\bfu_{k+1}$ at the $k$th iteration such that
\begin{equation}
	\label{eq:flexGK}
	\bfA \bfZ_k = \bfU_{k+1} \bfM_k \quad \mbox{and} \quad \bfA\t \bfU_{k+1} = \bfV_{k+1} \bfT_{k+1},
\end{equation}
where $\bfZ_k = \begin{bmatrix} \bfL_1^{-1} \bfv_1 & \cdots & \bfL_k^{-1}\bfv_k \end{bmatrix} \in \bbR^{n \times k}$, $\bfM_k \in \bbR^{(k+1)\times k}$ is upper Hessenberg, $\bfT_{k+1}\in \bbR^{(k+1)\times (k+1)}$ is upper triangular, and $\bfU_{k+1} = \begin{bmatrix}
 \bfu_1 & \ldots & \bfu_{k+1}
\end{bmatrix}\in\bbR^{m \times (k+1)}$ and $\bfV_{k+1} =\begin{bmatrix}
 \bfv_1 & \ldots & \bfv_{k+1}
\end{bmatrix}\in \bbR^{n\times (k+1)}$ contain orthonormal columns.
For simplicity, we let $\bfx_0=\bfzero$ and $\bfu_1 = \bfb/\norm[2]{\bfb}$, but extensions to include $\bfx_0\neq\bfzero$ are trivial and follow standard derivations.
Compared to the standard GKB \cite{golub1965calculating}, the key differences are that we now have an upper Hessenberg and an upper triangular matrix, instead of {one} bidiagonal matrix.  Also, we must keep track of an additional set of vectors, namely the basis vectors in $\bfZ_k$.  Furthermore, since there is no bidiagonal structure to exploit, the additional computational requirement is orthogonalization with all previous vectors.
However, as for standard GKB, the computational cost per iteration is dominated by a matrix-vector product with $\bfA$ and one with $\bfA\t$. We remark that, if $\bfL_k = \bfL$, (\ref{eq:flexGK}) reduces to the right-preconditioned GKB.  The FGK process is summarized in Algorithm~\ref{alg:flexGK}.

\begin{algorithm}[!ht]
\begin{algorithmic}[1]
\STATE Initialize $\bfu_1 = \bfb/\beta_1,$ where $\beta_1 = \norm{\bfb}$
\FOR {i=1, \dots, k}
\STATE Compute $\bfw = \bfA\t \bfu_i$, $t_{ji} = \bfw\t \bfv_j$ for $j=1, \ldots, i-1$
\STATE Set $\bfw = \bfw - \sum_{j=1}^{i-1} t_{ji} \bfv_j$, compute $t_{ii} = \norm{\bfw}$ and take $\bfv_i = \bfw/t_{ii}$
\STATE Compute $\bfz_i = \bfL_i^{-1} \bfv_i$ and $\bfw = \bfA \bfz_i$
\STATE  $m_{ji} = \bfw\t \bfu_j$ for $j=1, \ldots, i$ and set $\bfw = \bfw - \sum_{j=1}^i m_{ji} \bfv_j$
\STATE Compute $m_{i+1,i} = \norm{\bfw}$ and take $\bfu_{i+1} = \bfw/m_{i+1,i}$
\ENDFOR
\end{algorithmic}
\caption{Flexible Golub-Kahan (FGK) Process}
\label{alg:flexGK}
\end{algorithm}

Notice that the column vectors of $\bfZ_k$ no longer span a Krylov subspace, but they do provide a basis for the solution.  In Section \ref{sec:numerics} we provide some qualitative observations regarding the basis vectors.  For now, consider the data-fit term.  Using the relationships in~\eqref{eq:flexGK}, the projected residual can be written as
\begin{equation}
 \bfA \bfZ_k \bfy - \bfb  = \bfU_{k+1}(\bfM_k \bfy - \beta_1\bfe_1)  \label{eq:projresidual}
\end{equation}
where $\bfe_1 \in\bbR^{k+1}$ is the first column of the identity matrix of order $k+1$. Analogous to the mathematical definitions of LSQR and LSMR iterates in \cite{paige1982lsqr,paige1982algorithm,fong2011lsmr}, we define flexible LSQR (FLSQR) and flexible LSMR (FLSMR) iterates as $\bfx_k = \bfZ_k \bfy_k$, where
\begin{equation}
  \label{eq:FLSQR}
  \bfy_k = \argmin_\bfy \norm[2]{\bfM_k \bfy - \beta_1\bfe_1}^2
  \end{equation}
  and
  \begin{equation}
    \label{eq:FLSMR}
    \bfy_k = \argmin_\bfy \norm[2]{\bfT_{k+1} \bfM_k \bfy- \beta_1 t_{11} \bfe_1}^2,
    \end{equation}
  respectively.  The {FLSMR} formulation exploits the following relationships
  \begin{equation*}
    \bfA\t (\bfA \bfZ_k \bfx - \bfb) = \bfV_{k+1} (\bfT_{k+1} \bfM_k \bfy- t_{11} {\beta_1} \bfe_1)\quad \mbox{and} \quad \bfA\t \bfb  = \bfV_{k+1} t_{11} {\beta_1} \bfe_1\,.
  \end{equation*}

We have the following optimality properties for FLSQR and FLSMR that are similar to the FGMRES~\cite{saad2003iterative} ones, and that are analogous to the ones enjoyed by the standard counterparts of these methods.
\begin{proposition}
  \label{proposition}
  The FLSQR solution $\bfx_k$ obtained at the $k$th step minimizes the residual norm $\norm[2]{\bfA \bfx_k -\bfb}$ over $\bfx_0+{\rm span}\{\bfZ_k\},$
  and the FLSMR solution $\bfx_k$ obtained at the $k$th step minimizes $\norm[2]{\bfA\t(\bfA \bfx_k -\bfb)}$ over $\bfx_0+{\rm span}\{\bfZ_k\}.$
\end{proposition}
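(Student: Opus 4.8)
The plan is to reduce each full-dimensional minimization to the corresponding projected least-squares problem, \eqref{eq:FLSQR} or \eqref{eq:FLSMR}, by combining the two identities of the flexible Golub-Kahan decomposition \eqref{eq:flexGK} with the fact that $\bfU_{k+1}$ and $\bfV_{k+1}$ have orthonormal columns; this mirrors the standard LSQR/LSMR and FGMRES optimality arguments. Since $\bfx_0=\bfzero$, every $\bfx\in\bfx_0+\spann{\bfZ_k}$ can be written as $\bfx=\bfZ_k\bfy$ for some $\bfy\in\bbR^k$, so minimizing any quantity over the affine subspace $\bfx_0+\spann{\bfZ_k}$ is the same as minimizing it over all $\bfy\in\bbR^k$ (the map $\bfy\mapsto\bfZ_k\bfy$ is onto $\spann{\bfZ_k}$, whether or not $\bfZ_k$ has full column rank). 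It then suffices to show that the FLSQR and FLSMR iterates $\bfx_k=\bfZ_k\bfy_k$ realize these minima.

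For the FLSQR claim I would start from the projected-residual identity \eqref{eq:projresidual}: using $\bfA\bfZ_k=\bfU_{k+1}\bfM_k$ together with $\bfb=\beta_1\bfu_1=\beta_1\bfU_{k+1}\bfe_1$ (recall $\bfu_1=\bfb/\beta_1$ is the first column of $\bfU_{k+1}$), one obtains $\bfA\bfZ_k\bfy-\bfb=\bfU_{k+1}(\bfM_k\bfy-\beta_1\bfe_1)$ for every $\bfy$. Because $\bfU_{k+1}$ has orthonormal columns, $\norm[2]{\bfA\bfZ_k\bfy-\bfb}=\norm[2]{\bfM_k\bfy-\beta_1\bfe_1}$, so the full-space objective and the projected objective agree for all $\bfy$; hence the minimizer $\bfy_k$ of \eqref{eq:FLSQR} yields an $\bfx_k=\bfZ_k\bfy_k$ that minimizes $\norm[2]{\bfA\bfx-\bfb}$ over $\bfx_0+\spann{\bfZ_k}$.

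For the FLSMR claim I would left-multiply the same residual identity by $\bfA\t$ and use the second relation in \eqref{eq:flexGK}, $\bfA\t\bfU_{k+1}=\bfV_{k+1}\bfT_{k+1}$, to get $\bfA\t(\bfA\bfZ_k\bfy-\bfb)=\bfV_{k+1}\bfT_{k+1}(\bfM_k\bfy-\beta_1\bfe_1)$. Since $\bfT_{k+1}$ is upper triangular its first column is $t_{11}\bfe_1$, i.e.\ $\bfT_{k+1}\bfe_1=t_{11}\bfe_1$ (equivalently $\bfA\t\bfb=\beta_1 t_{11}\bfV_{k+1}\bfe_1$), so $\bfA\t(\bfA\bfZ_k\bfy-\bfb)=\bfV_{k+1}(\bfT_{k+1}\bfM_k\bfy-\beta_1 t_{11}\bfe_1)$. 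Orthonormality of the columns of $\bfV_{k+1}$ then gives $\norm[2]{\bfA\t(\bfA\bfZ_k\bfy-\bfb)}=\norm[2]{\bfT_{k+1}\bfM_k\bfy-\beta_1 t_{11}\bfe_1}$, so \eqref{eq:FLSMR} is exactly the minimization of $\norm[2]{\bfA\t(\bfA\bfx-\bfb)}$ over the subspace, attained at $\bfx_k=\bfZ_k\bfy_k$.

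There is no serious obstacle: the argument is a verification resting only on \eqref{eq:flexGK}, the identification of $\bfb$ (resp.\ $\bfA\t\bfb$) with a scalar multiple of the first basis vector, and the norm-preserving property of matrices with orthonormal columns. The only points needing a little care are that the FGK relations \eqref{eq:flexGK} hold exactly as produced by Algorithm~\ref{alg:flexGK} (which we take as given from the construction), and that passing from a minimization over $\bfx$ in the affine subspace to one over $\bfy\in\bbR^k$ is legitimate even when $\bfZ_k$ is rank-deficient — surjectivity of $\bfy\mapsto\bfZ_k\bfy$ onto $\spann{\bfZ_k}$ is all that is needed, so uniqueness of $\bfy_k$ is not required for the stated optimality of $\bfx_k$.
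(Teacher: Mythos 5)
Your proof is correct and follows exactly the route the paper intends: it rests on the projected-residual identity \eqref{eq:projresidual} and the displayed relations $\bfA\t(\bfA\bfZ_k\bfy-\bfb)=\bfV_{k+1}(\bfT_{k+1}\bfM_k\bfy-t_{11}\beta_1\bfe_1)$, combined with the norm-preservation by the orthonormal columns of $\bfU_{k+1}$ and $\bfV_{k+1}$ and the surjectivity of $\bfy\mapsto\bfZ_k\bfy$ onto $\spann{\bfZ_k}$. The paper states the proposition without writing out these steps, so your argument simply makes the intended verification explicit.
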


We note that FLSQR is mathematically equivalent to the full-recurrence flexible conjugate gradient method \cite{golub1999inexact,notay2000flexible} applied to the normal equations, but the advantages of this formulation are that we avoid working directly with the normal equations, and there is a natural mean to evaluate residuals for the original system. In this respect, FLSQR is comparable to the FCGLS method in \cite{NNFCGLS}.

 Furthermore, we note that $\bfT_{k+1} \bfM_k$ is a $(k+1) \times k$ upper Hessenberg matrix and that the solution subspace generated by the FGK process is the same as the one generated by the flexible Arnoldi algorithm applied to the normal equations with initial guess $\bfx_0 =\bf0$. More precisely, the following equivalence theorem holds.
\begin{theorem}
 \label{thm:equiv}
 Let $\bfA \in \bbR^{m \times n}, m \geq n$ (full column rank), $\bfb \in \bbR^m,$ $\bfx_0 = \bf0,$ and 
 take the preconditioners $\bfL_i, i = 1, 2, \ldots k$. Then, in exact arithmetic, the $k$th iterate of FLSMR is equivalent to the $k$th iterate of FGMRES applied to the normal equations
 \begin{equation}
  \bfA\t\bfA \bfx = \bfA\t\bfb.
 \end{equation}
\end{theorem}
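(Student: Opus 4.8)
The plan is to reduce the flexible Golub--Kahan (FGK) decomposition (\ref{eq:flexGK}) to the flexible Arnoldi decomposition (\ref{eq:flexiArn}) of $\bfA\t\bfA$ driven by the same preconditioners $\bfL_i$, and then to match the two projected minimization problems. \emph{First}, multiply $\bfA\bfZ_k = \bfU_{k+1}\bfM_k$ on the left by $\bfA\t$ and substitute $\bfA\t\bfU_{k+1} = \bfV_{k+1}\bfT_{k+1}$ to obtain
\begin{equation*}
\bfA\t\bfA\,\bfZ_k \;=\; \bfV_{k+1}\,\bfT_{k+1}\bfM_k \;=:\; \bfV_{k+1}\widetilde{\bfH}_k,
\end{equation*}
where $\widetilde{\bfH}_k \in \bbR^{(k+1)\times k}$ is upper Hessenberg (as already noted, being the product of an upper triangular and an upper Hessenberg matrix), with subdiagonal entries $[\widetilde{\bfH}_k]_{i+1,i} = t_{i+1,i+1}\,m_{i+1,i}$, which are strictly positive as long as the FGK process does not break down. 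Since $\bfZ_k = \begin{bmatrix} \bfL_1^{-1}\bfv_1 & \cdots & \bfL_k^{-1}\bfv_k\end{bmatrix}$ and $\bfV_{k+1}$ has orthonormal columns, this identity has exactly the form of (\ref{eq:flexiArn}) for the matrix $\bfA\t\bfA$.

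\emph{Second}, match the starting vectors. With $\bfx_0 = \bf0$ the flexible Arnoldi process for the normal equations starts from $\widehat\bfv_1 = \bfr_0/\norm[2]{\bfr_0}$ with $\bfr_0 = \bfA\t\bfb$; from Algorithm~\ref{alg:flexGK}, $\bfu_1 = \bfb/\beta_1$ gives $\bfv_1 = \bfA\t\bfu_1/\norm[2]{\bfA\t\bfu_1} = \bfA\t\bfb/\norm[2]{\bfA\t\bfb} = \widehat\bfv_1$, and also $\beta_1 t_{11} = \norm[2]{\bfA\t\bfb} = \norm[2]{\bfr_0}$. \emph{Third}, invoke uniqueness of the flexible Arnoldi decomposition: comparing the $i$th columns in $\bfA\t\bfA\bfZ_k = \bfV_{k+1}\widetilde{\bfH}_k$ shows that $\bfv_{i+1}$ is the normalized component of $\bfA\t\bfA\bfL_i^{-1}\bfv_i$ orthogonal to ${\rm span}\{\bfv_1,\dots,\bfv_i\}$, with the sign fixed by $[\widetilde{\bfH}_k]_{i+1,i} > 0$; equivalently, one reads directly from Algorithm~\ref{alg:flexGK} that $\bfA\t\bfu_{i+1}$ is a linear combination of $\bfA\t\bfA\bfL_i^{-1}\bfv_i$ and of $\bfv_1,\dots,\bfv_i$, so that after reorthogonalization and normalization $\bfv_{i+1}$ satisfies exactly the flexible Arnoldi recurrence for $\bfA\t\bfA$. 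Arguing inductively from $\bfv_1 = \widehat\bfv_1$, we get $\bfV_{k+1} = \widehat\bfV_{k+1}$, $\bfZ_k = \widehat\bfZ_k$, and $\widetilde{\bfH}_k = \widehat\bfH_k$; in particular ${\rm span}\{\bfZ_k\} = {\rm span}\{\widehat\bfZ_k\}$ and the two processes break down (if ever) at the same step. Full column rank of $\bfA$ ensures $\bfA\t\bfA$ is nonsingular and $\widetilde{\bfH}_k$ has full column rank $k$, so the projected least-squares problems below have unique solutions.

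\emph{Fourth}, match the iterates. FGMRES applied to $\bfA\t\bfA\bfx = \bfA\t\bfb$ is (\ref{eq:GAT}) with $\lambda = 0$: it returns $\bfx_k = \widehat\bfZ_k\widehat\bfy_k$ with $\widehat\bfy_k = \argmin_\bfy \norm[2]{\widehat\bfH_k\bfy - \norm[2]{\bfr_0}\bfe_1}^2$. Substituting $\widehat\bfH_k = \bfT_{k+1}\bfM_k$, $\norm[2]{\bfr_0} = \beta_1 t_{11}$, and $\widehat\bfZ_k = \bfZ_k$ turns this into the FLSMR subproblem (\ref{eq:FLSMR}), so the two $k$th iterates coincide. (Equivalently, by Proposition~\ref{proposition} FLSMR minimizes $\norm[2]{\bfA\t(\bfA\bfx - \bfb)}$ over $\bfx_0 + {\rm span}\{\bfZ_k\}$, while FGMRES minimizes the normal-equations residual $\norm[2]{\bfA\t\bfb - \bfA\t\bfA\bfx}$ over the same subspace, and these two objectives are identical.)

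The delicate point is the third step: one must be sure that the vectors $\bfv_i$ produced inside the FGK process really are the flexible Arnoldi vectors of $\bfA\t\bfA$ — equivalently, that $\bfZ_k$ is assembled from the correct sequence — and that the positivity of the subdiagonal together with the no-breakdown hypothesis, which are precisely what make the flexible Arnoldi decomposition unique, genuinely hold here. Once that identification is in place, the rest is routine bookkeeping with orthonormal factors.
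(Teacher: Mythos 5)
Your proof is correct, and it reaches the same destination as the paper's --- identify the FGK solution subspace with the flexible Arnoldi subspace for $\bfA\t\bfA$, then match the projected problems --- but you execute the crucial identification step in a genuinely more careful way. The paper simply asserts that the two subspaces coincide, quoting the span $\{\bfL_1^{-1}\bfA\t\bfb,\,(\bfL_2^{-1}\bfA\t\bfA)(\bfL_1^{-1}\bfA\t\bfb),\ldots\}$ and then invokes the two optimality properties (Proposition~\ref{proposition} for FLSMR, the FGMRES optimality for the normal equations) to conclude. That span formula is actually only a heuristic for flexible methods: since $\bfv_{i+1}$ is obtained by orthogonalizing $\bfA\t\bfu_{i+1}$ against $\bfv_1,\ldots,\bfv_i$ \emph{before} the preconditioner $\bfL_{i+1}^{-1}$ is applied, the column $\bfz_{i+1}=\bfL_{i+1}^{-1}\bfv_{i+1}$ picks up terms like $\bfL_{i+1}^{-1}\bfv_j$ that are not in the quoted span, so the subspace equality really does need the vector-by-vector argument you give. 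Your route --- forming $\bfA\t\bfA\bfZ_k=\bfV_{k+1}\bfT_{k+1}\bfM_k$, checking $\bfv_1=\widehat\bfv_1$ and $\beta_1 t_{11}=\norm[2]{\bfA\t\bfb}$, and then arguing inductively (with positivity of the subdiagonal entries $t_{i+1,i+1}m_{i+1,i}$) that the FGK vectors $\bfv_i$ satisfy exactly the flexible Arnoldi recurrence for $\bfA\t\bfA$ --- establishes the stronger conclusion $\bfV_{k+1}=\widehat\bfV_{k+1}$, $\bfZ_k=\widehat\bfZ_k$, $\widehat\bfH_k=\bfT_{k+1}\bfM_k$, after which the two projected least-squares problems are literally the same and the optimality argument becomes optional. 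In short, your proof fills the gap the paper glosses over; the only cosmetic caveat is that you implicitly read line~6 of Algorithm~\ref{alg:flexGK} with $\bfu_j$ in place of $\bfv_j$ (an evident typo in the paper, since $\bfw=\bfA\bfz_i\in\bbR^m$).
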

\begin{proof}
Note that, after $k$ iterations of FGMRES applied to the normal equations, we have upper Hessenberg matrix $\widehat\bfH_k \in \bbR^{(k+1) \times k}$, matrix $\widehat\bfV_{k+1} \in \bbR^{n\times (k+1)} $ with orthonormal columns and matrix $\widehat \bfZ_k = \begin{bmatrix} \bfL_1^{-1}\widehat\bfv_1 & \ldots & \bfL_k^{-1}\widehat\bfv_k \end{bmatrix} \in \bbR^{n \times k}$ that satisfies the relationship
\begin{equation}
 \bfA\t\bfA \widehat \bfZ_k = \widehat\bfV_{k+1} \widehat\bfH_k\,.
\end{equation}
The projected problem is given by
\begin{equation}
  \label{eqn:GMRESproj}
 \min_{\bfx \in \calR(\widehat\bfZ_k)} \norm[2]{\bfA\t \bfA \bfx - \bfA\t \bfb}^2
  = \min_{\bfy}\norm[2]{\widehat \bfH_k  \bfy - \norm[2]{\bfA\t\bfb}\bfe_1}^2\,,
\end{equation}
so the $k$th iterate of FGMRES (assuming no breakdown) is given by \begin{equation*}
\widehat\bfx_k = \widehat \bfZ_k \widehat \bfH_k^\dagger \norm[2]{\bfA\t\bfb}\bfe_1.
\end{equation*}
In exact arithmetic the solution subspaces generated by FGMRES and FGK in Algorithm~\ref{alg:flexGK} are the same, and coincide with
\begin{equation*}
 {\rm span}\{\bfL_1^{-1} \bfA\t \bfb, (\bfL_2^{-1} \bfA\t \bfA)(\bfL_1^{-1} \bfA\t \bfb), \ldots, (\bfL_k^{-1}\bfA\t\bfA)\cdots (\bfL_2^{-1}\bfA\t\bfA)(\bfL_1^{-1}\bfA\t\bfb) \}\,,
\end{equation*}
so that $\widehat\bfZ_k = \bfZ_k$.
The optimality condition for FGMRES (see Proposition~9.2 in~\cite{saad2003iterative}) and FLSMR (see Proposition~\ref{proposition}) guarantee that the $k$th iterate of FLSMR and FGMRES both correspond to the solution of~\eqref{eqn:GMRESproj}.
\end{proof}

\subsection{Solving the regularized problem: flexible hybrid algorithms}
\label{sub:hybridFGK}
As described in Section~\ref{sub:FGK}, the FGK process can be used to build a solution subspace that can efficiently incorporate changing preconditioners, and one can solve the projected problems~\eqref{eq:FLSQR} and~\eqref{eq:FLSMR}, which correspond to the FLSQR and FLSMR methods respectively. However, it is well-known that, for inverse problems, iterative methods exhibit a semiconvergent behavior, where the reconstruction errors $\norm[2]{\bfx_k - \bfx_\true}/\norm[2]{\bfx_\true}$ decrease initially but at some point increase due to amplification of noise \cite{hansen2010discrete}.  This phenomenon, which is common for most ill-posed inverse problems, {occurs also for flexible methods, as} can be seen in Figure~\ref{fig:semiconvergence}(a).

Hybrid methods, where regularization is included on the projected problem, have been proposed as means to stabilize the reconstruction errors. The first hybrid approach that we propose is analogous to the GAT algorithm (c.f. equation~\eqref{eq:GAT}), where we include a standard regularization term in~\eqref{eq:FLSQR}, so that
\begin{equation}
	\label{eq:proj_I}
 \bfy_k = \argmin_\bfy \norm[2]{\bfM_k \bfy -  \beta \bfe_1}^2 + \lambda \norm[2]{\bfy}^2.
\end{equation}
Henceforth, we refer to $\bfx_k = \bfZ_k \bfy_k$, where $\bfy_k$ is defined in~\eqref{eq:proj_I}, as FLSQR-I iterates.

We also consider a hybrid subspace optimization method called FLSQR-R where iterates are constructed as $\bfx_k = \bfZ_k \bfy_k$ where
\begin{equation}
	\label{eq:proj_R}
 \bfy_k = \argmin_\bfy \norm[2]{\bfM_k \bfy -  \beta \bfe_1}^2 + \lambda \norm[2]{\bfR_k\bfy}^2\,.
\end{equation}
Here, $\bfR_k$ comes from the thin QR factorization $\bfZ_k = \bfQ_k \bfR_k$, which is inexpensive to compute if $k$ is not too large.
The FLSQR-R method exhibits some desirable properties, especially for inverse problems.  First, the FLSQR-R iterates can be interpreted as a best approximation in a subspace, in that $\bfx_k$ solves
\begin{equation}
  \label{eq:FLSQR-Requiv}
  \min_{\bfx \in \calR(\bfZ_k)} \norm[2]{\bfA \bfx - \bfb}^2 + \lambda \norm[2]{\bfx}^2\,.
\end{equation}
Hence, the regularization parameter $\lambda$ corresponds to regularization for the original full-dimensional inverse problem.  Second, using a reformulation of the FLSQR-R subproblem~\eqref{eq:proj_R}, we can show that the singular values of the coefficient matrix $\bfM_k \bfR_k^{-1}$ provide good approximations to the singular values of $\bfA.$  Indeed, we can see this by considering the following relations,
\begin{align}
\bfR_k^{-\top}\bfM_k\t\bfM_k \bfR_k^{-1} & = \bfR_k^{-\top}\bfM_k\t \bfU_{k+1}\t \bfU_{k+1} \bfM_k\bfR_k^{-1} \nonumber\\
& = \bfR_k^{-\top}\bfZ_k\t \bfA\t \bfA \bfZ_k\bfR_k^{-1}\label{SingValApprox}\\
& = \bfQ_k\t \bfA\t \bfA \bfQ_k,\nonumber
\end{align}
where $\bfQ_k \in \bbR^{n \times k}$ contains orthonormal columns.
Since the eigenvalues are just the squares of the singular values, we see that as $k$ increases, the singular values of $\bfM_k \bfR_k^{-1}$ provide better approximations to the singular values of $\bfA$.

Hybrid LSMR variants, namely the FLSMR-I and FLSMR-R methods, can be defined analogously.  Then, using Theorem~\ref{thm:equiv}, we can see that in exact arithmetic and for a fixed regularization parameter, FLSMR-I iterates are equivalent to GAT iterates applied to the normal equations with initial guess $\bf0$.  However, the benefit of the FGK approaches versus GAT on the normal equations is that FGK produces residual norms for the original problem, which can be important for tools such as the discrepancy principle for parameter selection and for stopping criteria.

Unless otherwise stated, the parameter choice methods considered here are based on the discrepancy principle: in particular, we either prescribe the discrepancy principle to be satisfied at each iteration, or we apply the ``secant update'' variant prescribing suitable updates of the regularization parameter at each iteration (see ~\cite{gazzola2014generalized} and \cite{gazzola2015survey} for a description of these regularization parameter selection and stopping criteria strategies, which can be trivially extended to work with flexible decompositions).

\paragraph{An Illustration} The goals of this illustration are (1) to demonstrate the higher quality of the solutions obtained by applying flexible methods (due to a better basis), (2) to motivate the need for a hybrid approach (by showing semiconvergence behavior of FLSQR and FLSMR), and (3) to show that the singular values of the original problem can be approximated well by using FLSQR-R.  More thorough numerical results and comparisons will be presented in Section~\ref{sec:numerics}.

For this illustration, we use the \texttt{heat} example from RegularizationTools \cite{hansen2010discrete}, where $\bfA$ is $512 \times 512$, having a sparse true solution (so that $\bfPsi=\bfI$ and $p=1$ in (\ref{eq:pnormAb})). White noise is added to the observed signal at a noise level of $10^{-4}$.
\begin{figure}[!bt]
 \begin{center}
   \begin{tabular}{cc}
 \includegraphics[scale=0.30]{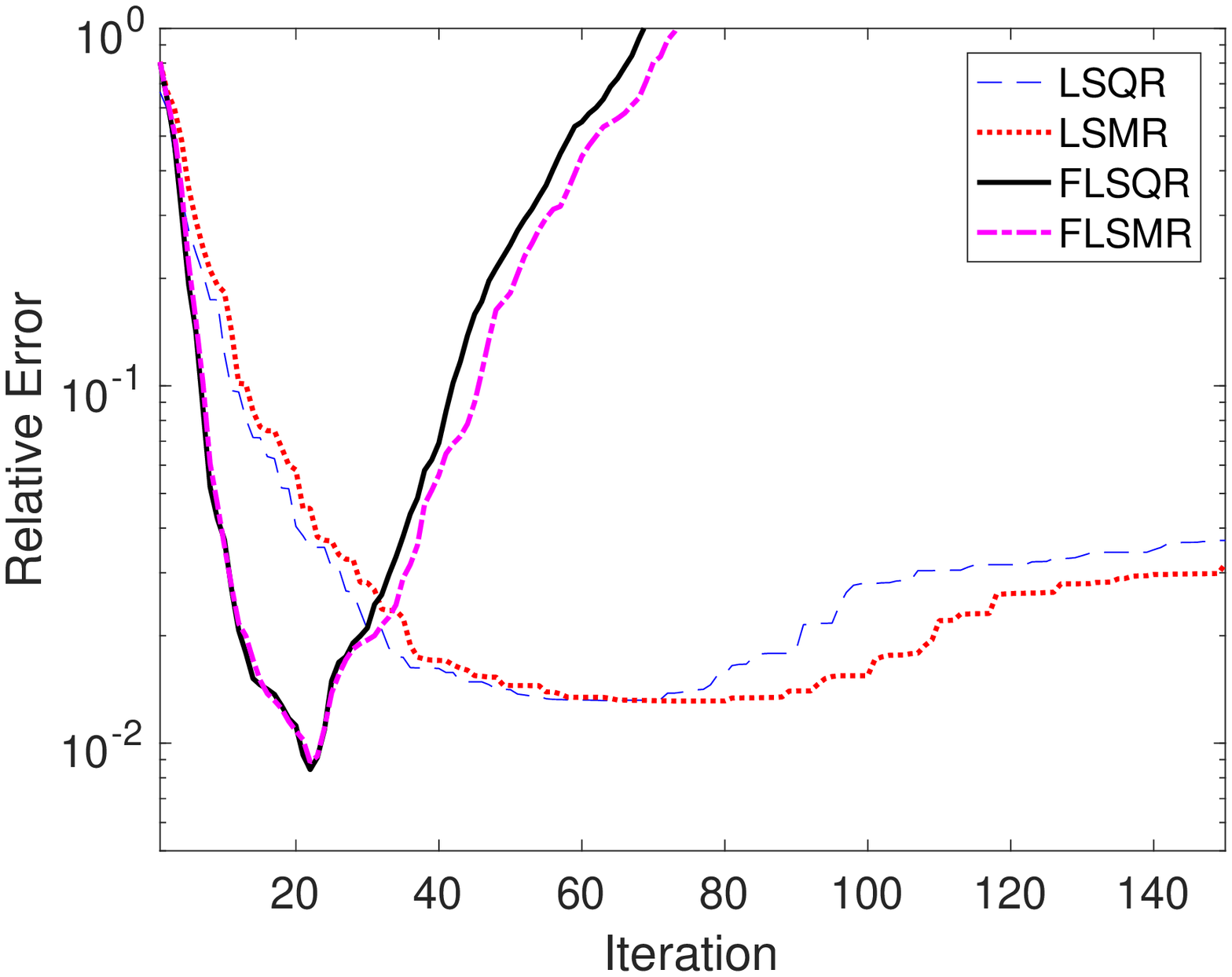} &
   \includegraphics[scale=0.30]{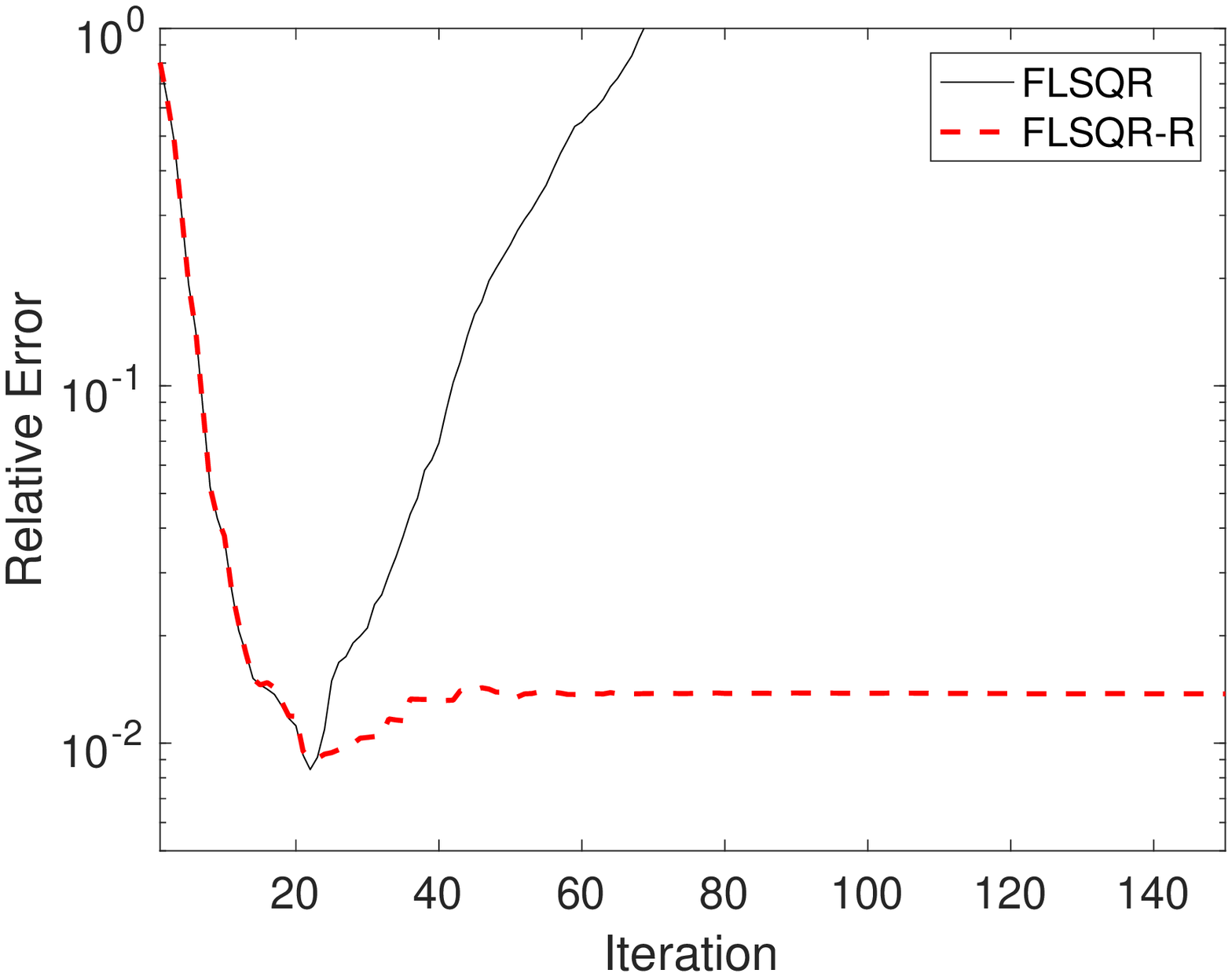}\\
   (a) & (b)
   \end{tabular}
 \end{center}
\caption{\emph{\texttt{heat}} test problem from \emph{\cite{hansen2010discrete}}. (a) relative errors, $\norm[2]{\bfx_k-\bfx_\true}/\norm[2]{\bfx_\true}$, for LSQR, LSMR, FLSQR, and FLSMR.  The semiconvergence behavior is evident.  (b) relative errors for FLSQR-I and FLSQR-R, with optimal regularization parameter.}
   \label{fig:semiconvergence}
\end{figure}
In Figure~\ref{fig:semiconvergence}(a), we provide relative reconstruction errors per iteration for LSQR, LSMR, FLSQR, and FLSMR.  The delayed semiconvergence of LSMR versus LSQR was noted in~\cite{chung2015hybrid}, and is also slightly visible for FLSMR versus FLSQR.  The more pronounced feature that we see here is that the flexible variants converge faster but also exhibit stronger semiconvergence in that the relative errors increase faster.  Thus, there is a greater need for additional regularization.  In Figure~\ref{fig:semiconvergence}(b), we show that the hybrid methods (here with the optimal regularization parameter) can stabilize the semiconvergence behavior.  Comparisons with different parameter selection methods can be found in Section~\ref{sec:numerics}.
We note that, for this particular test problem, flexible preconditioning speeds up the convergence of the iterative method. However, for our problems of interest (e.g., $\ell_p$-regularized problems), flexible preconditioning is mainly used to improve the solution subspace.  Thus, the particular choice of regularization for the projected problem is not so critical and is mostly required for stabilization of iterates.

Another important tool for the analysis of a regularization method is the approximation of the singular values of $\bfA.$  In the standard GKB, it is well known that the singular values of the bidiagonal matrix approximate the singular values of $\bfA.$  However, these results do not directly extend to the FGK process. In Figure~\ref{fig:singularvalues}, we provide the singular values of $\bfA$ in the dashed line, which is partially covered by the FLSQR-R curve.  Then, for $k=20$ to $k=420$ in intervals of $100$, we provide the singular values of upper Hessenberg matrix $\bfM_k$ for FLSQR, and the singular values of $\bfM_k \bfR_k^{-1}$ for FLSQR-R.
\begin{figure}[!b]
  \includegraphics[width=\textwidth]{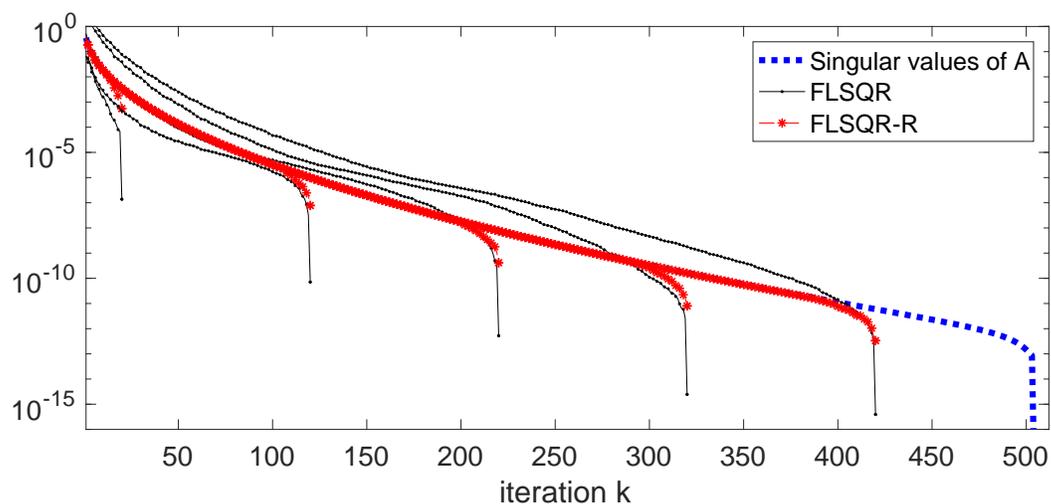}
\caption{\emph{\texttt{heat}} test problem from \emph{\cite{hansen2010discrete}}. This plot compares the singular values of $\bfA$ to the singular values of $\bfM_k$ from FLSQR and of $\bfM_k \bfR_k^{-1}$ from FLSQR-R, for iterations $k$ between $20$ and $420$ in increments of $100$.}\label{fig:singularvalues}
\end{figure}
Note that, in the flexible methods, the previous iterate $\bfx_{k-1}$, which may include regularization, changes the preconditioner and hence the FGK matrices.  It is evident that singular values of $\bfM_k \bfR_k^{-1}$ from FLSQR-R provide better approximations to the singular values of $\bfA$ than those of $\bfM_k$ from FLSQR.

\section{Flexible methods for the transformed problem}
\label{sec:sparstransf}

As mentioned in the Introduction, the goal in many applications is to compute solutions that are sparse with respect to some transformation (e.g., in some frequency domain).  In this section, we focus on flexible Arnoldi and {flexible} Golub-Kahan hybrid methods for solving the transformed $\ell_p$-regularized problem~\eqref{eq:pnormAb} where $\bfPsi \neq \bfI.$ Although any invertible transformation matrix can be used here, we will focus on wavelet transforms mainly for two reasons. Firstly, it is well known that many images can be sparsely represented in the wavelet domain. Indeed, wavelet-based iterative methods have been considered for linear inverse problems, see, e.g., \cite{daubechies2004iterative,espanol2014wavelet,klann2011wavelet,tropp2010computational}. Secondly, when taking orthonormal wavelet transforms,
computations involving $\ell_2$-norms of transformed quantities or inverse transforms can be easily performed. The specific strategy used to incorporate a wavelet transform into the flexible iterative solvers depend on the properties of the linear system at hand (which, eventually, depends on the properties of the inverse problem to be regularized) and, for all the methods, the regularization parameter can be automatically estimated.

Let $\widetilde \bfPsi\in\R^{m\times m}$ be an orthogonal matrix.  Then, it easy to see that problem~\eqref{eq:pnormAb}  is equivalent to
\begin{equation}\label{eq:transproblem}
 \min_\bfx \norm[2]{\widetilde \bfPsi \bfA \bfPsi^{-1} \bfPsi \bfx - \widetilde\bfPsi \bfb}^2 + \lambda \norm[p]{\bfPsi \bfx}^p\,.
\end{equation}
 Moreover, after some variable transformations,~\eqref{eq:transproblem} can be written as
\begin{equation}
  \label{eq:transformed}
 \min_\bfs \norm[2]{\bfH \bfs - \bfd}^2 + \lambda \norm[p]{\bfs}^p\,,\quad\mbox{where}\quad \bfH=\widetilde \bfPsi \bfA \bfPsi^{-1},\; \bfs=\bfPsi\bfx\,,\; \bfd =\widetilde\bfPsi \bfb\,,
\end{equation}
which is an $\ell_p$-regularized problem. The choice of $\widetilde \bfPsi$ is problem-dependent and solver-dependent.

For instance, when considering image deblurring problems where both $\bfx$ and $\bfb$ are images of the same size described by pixel values, it is natural to take $\widetilde \bfPsi=\bfPsi$ to be an orthogonal wavelet transform; this formulation was considered
in~\cite{belge2000wavelet}. If the GAT method is applied to solve problem (\ref{eq:transformed}) with $p=1$ and variable preconditioner $\bfL(\bfs_k) = \bfL_k$, then the following approximation subspace for the $k$th approximation of the transformed solution $\bfs$ is generated:
\begin{equation*}
 {\rm span}\{\bfL_1^{-1} \bfd,\bfL_2^{-1}\bfH\bfL_1^{-1}\bfd,\ldots,\bfL_k^{-1}\bfH\cdots \bfL_2^{-1}\bfH\bfL_1^{-1}\bfd \}\,.
\end{equation*}
This subspace enforces sparsity in the wavelet domain for the wavelet coefficients $\bfs$ of the original image $\bfx$. The approximation subspace for the latter is given by
\begin{equation}\label{tansfApprSubsp}
\bfPsi\t {\rm span}\{\bfL_1^{-1}\bfPsi\bfb,\bfL_2^{-1}\bfPsi\bfA\bfPsi\t\!\bfL_1^{-1}\bfPsi\bfb,\ldots,\bfL_k^{-1}\bfPsi\bfA\bfPsi\t\!\cdots \bfL_2^{-1}\bfPsi\bfA\bfPsi\t\!\bfL_1^{-1}\bfPsi\bfb \}\,,
\end{equation}
so that it is evident that first sparsity is enforced in the wavelet domain, and then the sparse wavelet coefficients are transformed back into the original pixel domain. However, for situations where one has no intuition regarding the sparsity properties of $\bfb$, one can simply take $\widetilde \bfPsi = \bfI$.

Analogously, if solvers based on the FGK process are applied to solve the same problem, then the approximation subspace for $\bfx$ is given by
\begin{eqnarray}
\bfPsi\t {\rm span}\left\{\bfL_1^{-1}\bfPsi\bfA\t\bfb,(\bfL_2^{-1}\bfPsi\bfA\t\bfA\bfPsi\t)\bfL_1^{-1}\bfPsi\bfA\t\bfb,\ldots,\right.\nonumber\\
\left.(\bfL_k^{-1}\bfPsi\bfA\t\bfA\bfPsi\t)\cdots (\bfL_2^{-1}\bfPsi\bfA\t\bfA\bfPsi\t)\bfL_1^{-1}\bfPsi\bfA\t\bfb \right\}\,.\label{FGKtansfsp}
\end{eqnarray}
Notice that the choice of $\widetilde \bfPsi$ is irrelevant for flexible methods based on FGK since
\[
\bfH\t\bfd=\bfPsi\bfA\t\widetilde\bfPsi\t\widetilde\bfPsi\bfb=\bfPsi\bfA\t\bfb,\quad
\bfH\t\bfH = \bfPsi\bfA\t\widetilde\bfPsi\t\widetilde\bfPsi\bfA\bfPsi\t= \bfPsi\bfA\t\bfA\bfPsi\t.
\]

\paragraph{An Illustration}
The goal of this illustration is to show that the approximation space generated by the flexible Arnoldi algorithm applied to problem (\ref{eq:transformed}) is more suitable than the one generated by its standard counterpart. We consider a 1D signal $\bfx$ with 64 entries, generated in such a way that only 8 of its 1-level Haar wavelet coefficients $\bfs$ are nonzero. The signal is corrupted by Gaussian blur with variance 2.25 and band 5, and white noise of level $10^{-2}$ is added. The exact and corrupted signals are displayed in Figure \ref{fig:BasisVectors}(a), and their wavelet coefficients are displayed in Figure \ref{fig:BasisVectors}(b). We choose $\lambda=0$ in (\ref{eq:transformed}) so that the approximation subspace (\ref{tansfApprSubsp}) does not depend on the specific parameter choice strategy that one may wish to consider. The threshold $\tau_1$ in (\ref{eq:weights2}) is set to 0.2, while $\tau_2$ is close to machine precision. Figure \ref{fig:BasisVectors}(c) displays the best reconstructions obtained by the FGMRES (11th iteration) and the GMRES (30th iteration) methods. One can clearly see that the FGMRES solution is of much higher quality than the GMRES one, and that the wavelet coefficients of the FGMRES solution are much sparser than the GMRES ones (see Figure \ref{fig:BasisVectors}(d)). The good performance of FGMRES for this example can be explained {by} looking at some of the basis vectors for the approximation of the solution, displayed in Figure \ref{fig:BasisVectors}(e)--(h). Indeed, the preconditioned basis vectors for the signal $\bfx$ have a quite piecewise-constant behavior, while the unpreconditioned ones display spurious oscillations; correspondingly, the preconditioned basis vectors for the wavelet coefficients $\bfs$ have a clear sparsity pattern, which is not reproduced by the unpreconditioned ones. Therefore, the FGMRES solution is better than the GMRES one as it is obtained by combining better basis vectors for the approximation subspace. We remark that the basis vectors generated from the FGK process have similar properties, and thus are omitted. Also, a similar behavior of the preconditioned basis vectors can be observed in the more challenging experiments presented in Section \ref{sec:numerics}.

\begin{figure}[tbp]
\centering
\begin{tabular}{cc}
\hspace{-0.7cm}\includegraphics[width=6.3cm]{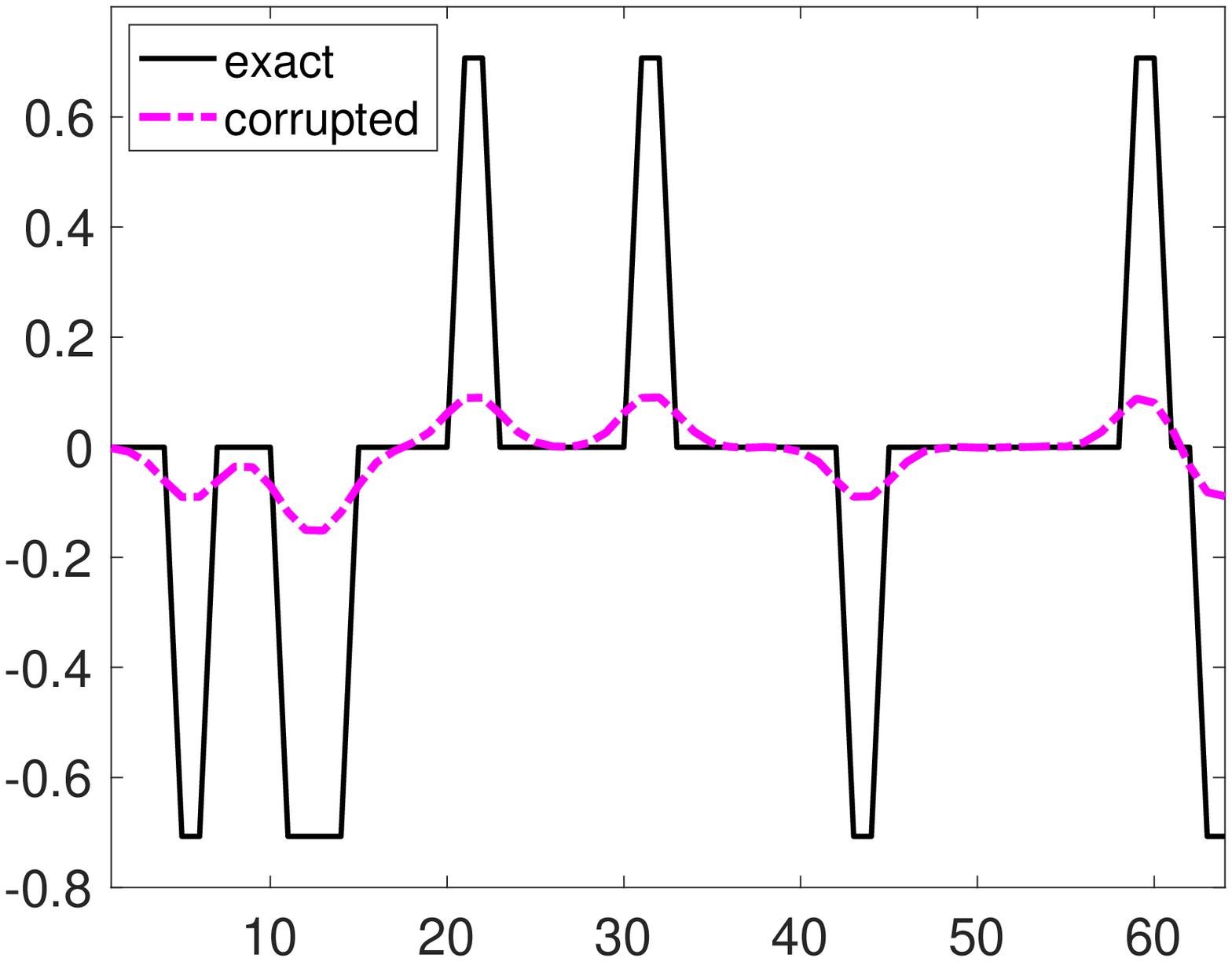} &
\hspace{-0.7cm}\includegraphics[width=6.3cm]{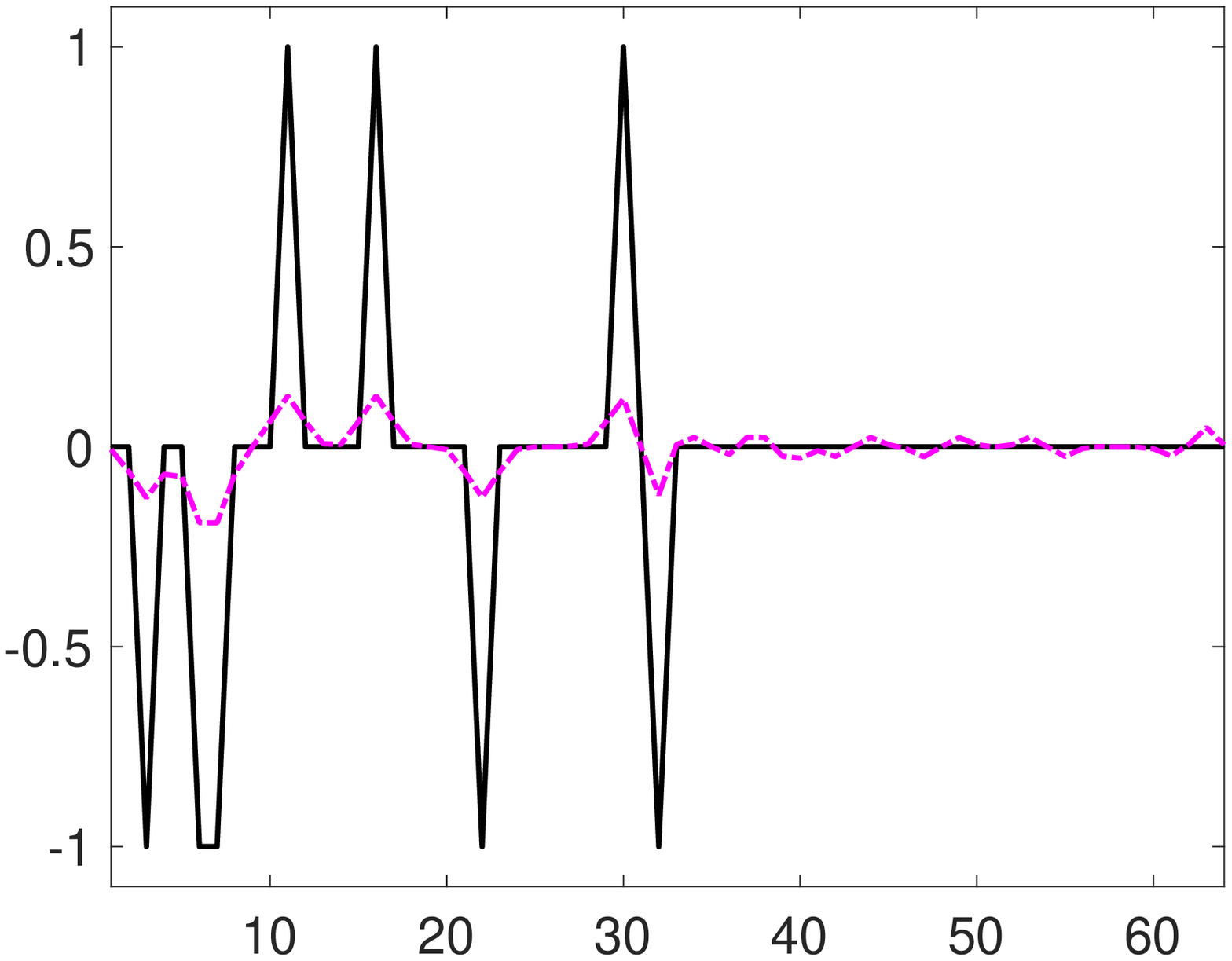}\vspace{-0.2cm}\\
\small{(a)} & \small{(b)}\\
\hspace{-0.7cm}\includegraphics[width=6.3cm]{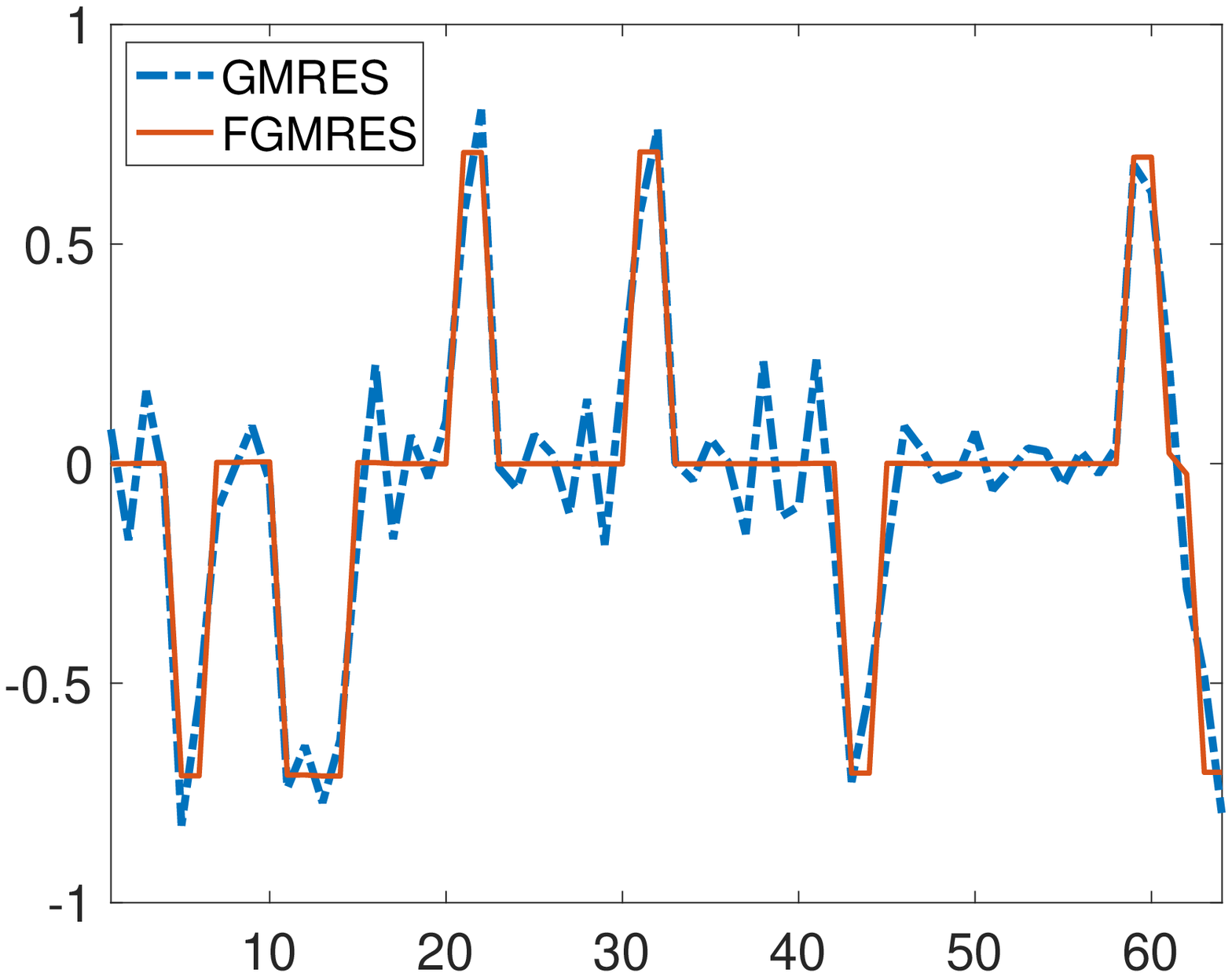} &
\hspace{-0.7cm}\includegraphics[width=6.3cm]{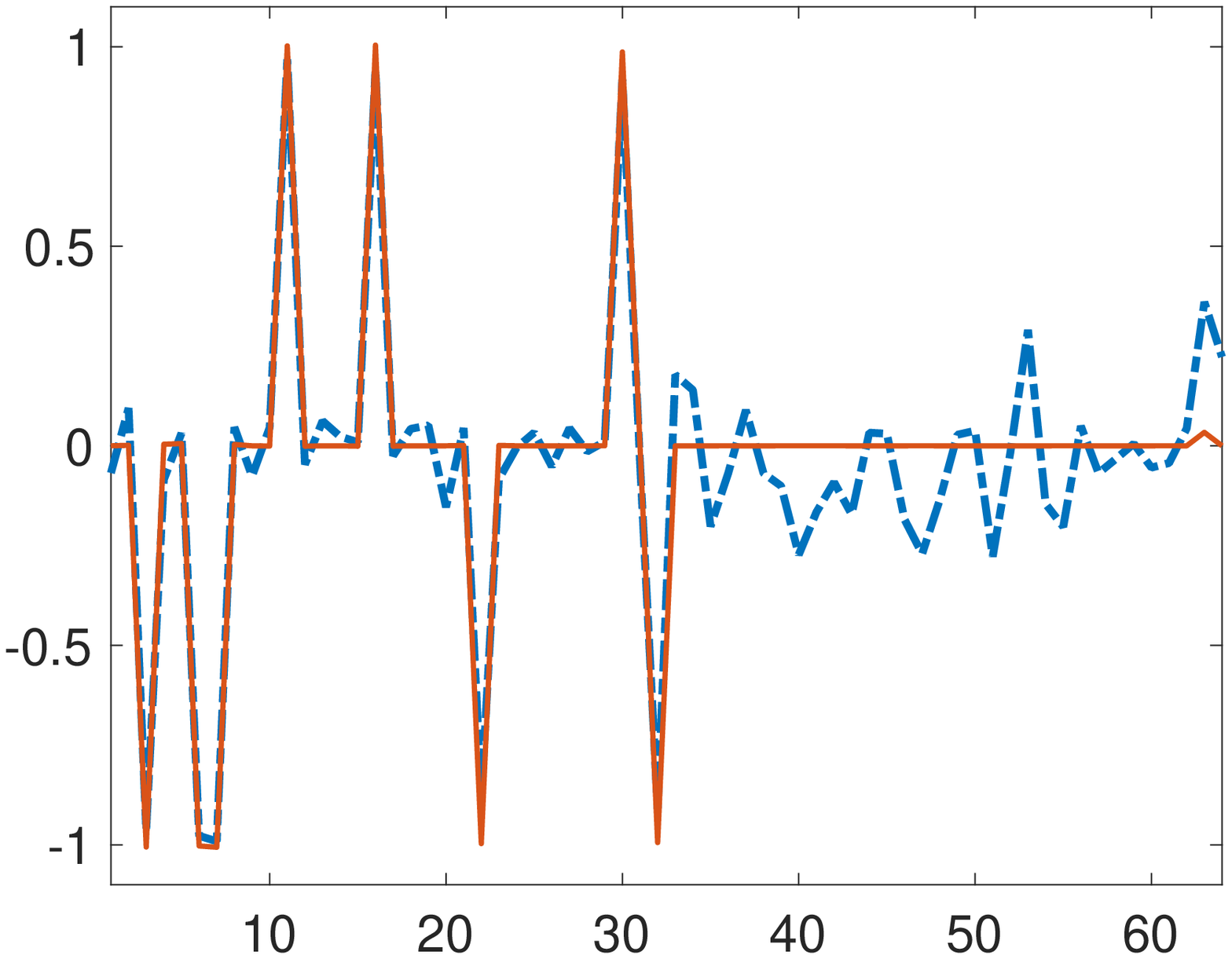}\vspace{-0.2cm}\\
\small{(c)} & \small{(d)}\\
\hspace{-0.7cm}\includegraphics[width=6.3cm]{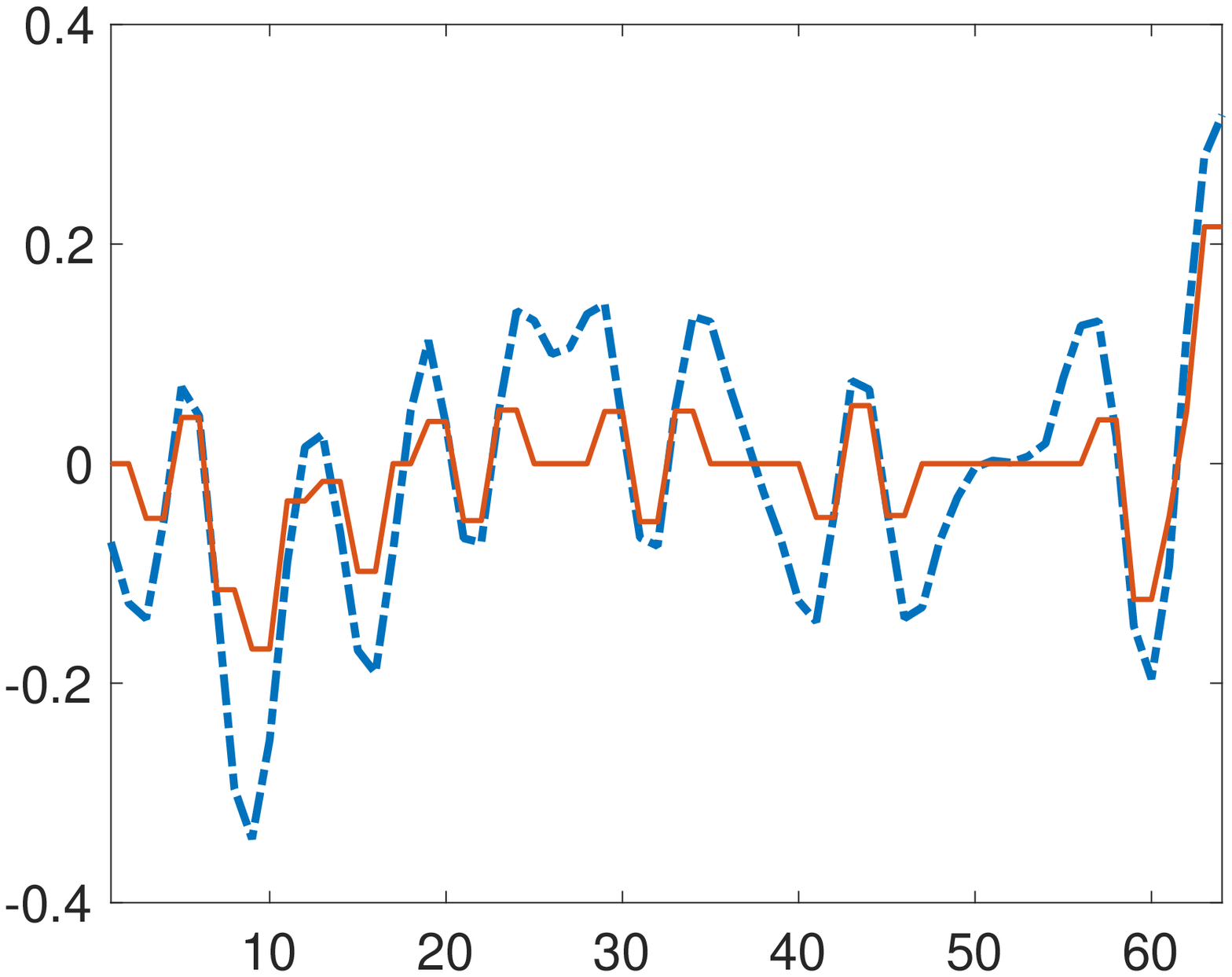} &
\hspace{-0.7cm}\includegraphics[width=6.3cm]{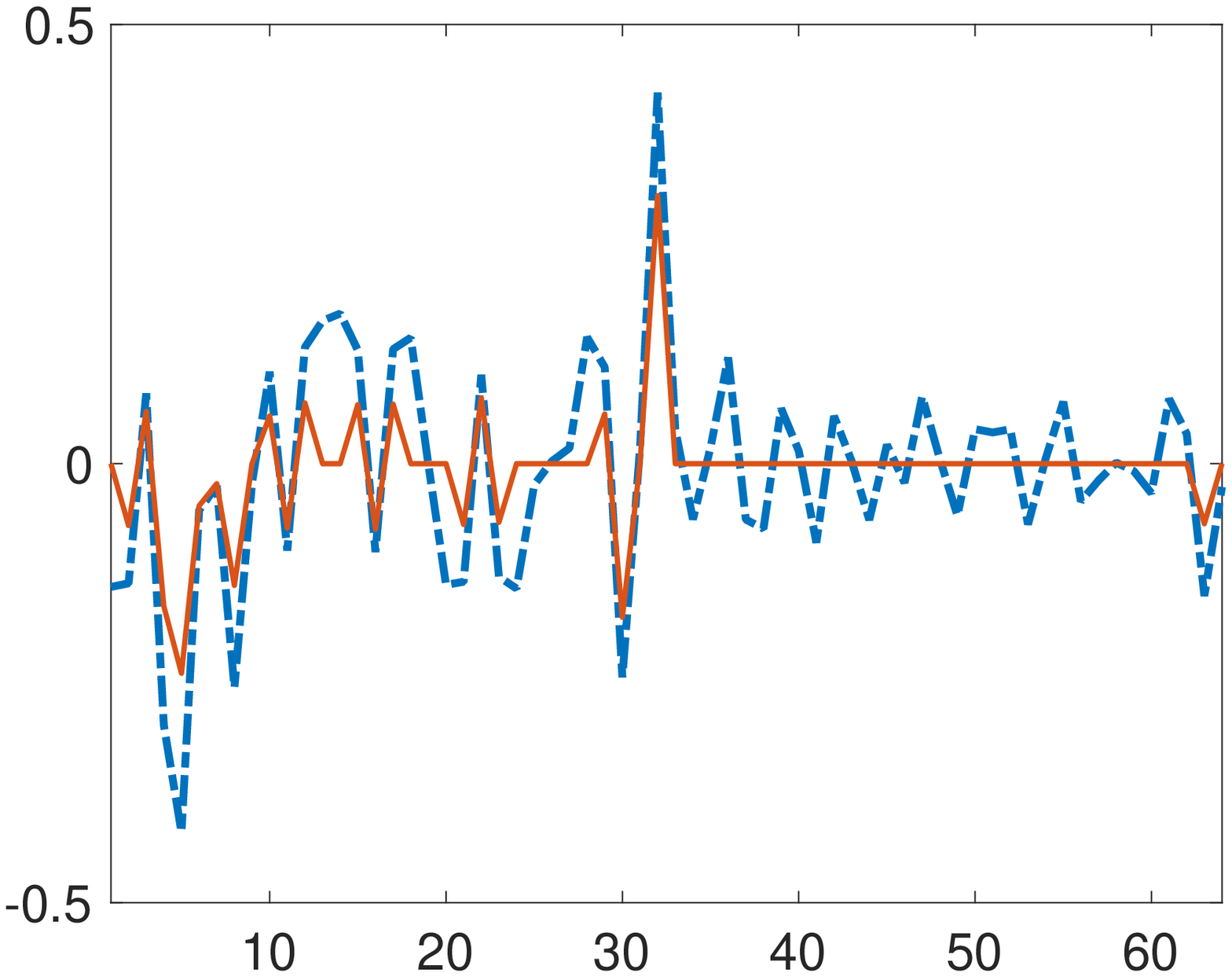}\vspace{-0.2cm}\\
\small{(e)} & \small{(f)}\\
\hspace{-0.7cm}\includegraphics[width=6.3cm]{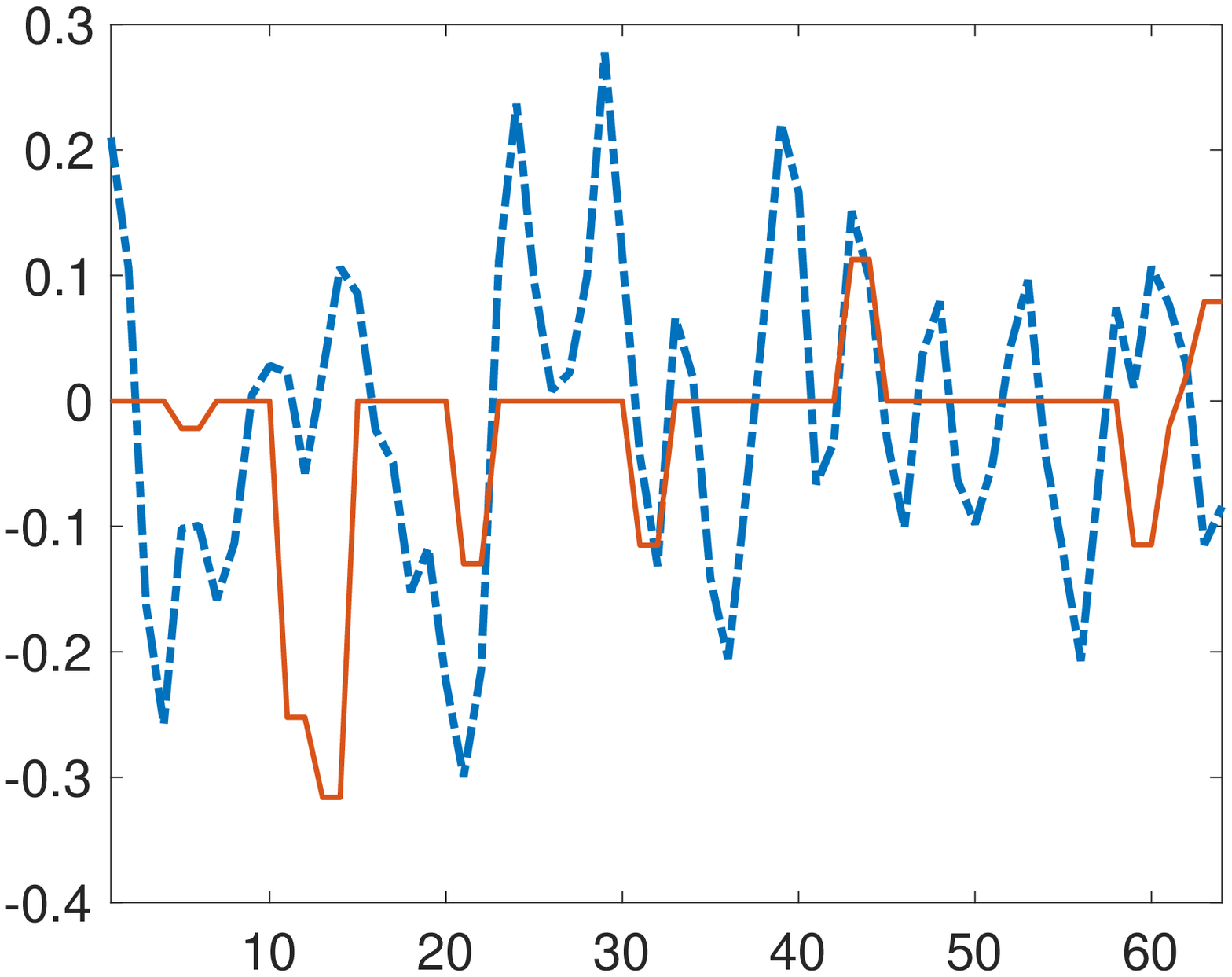} &
\hspace{-0.7cm}\includegraphics[width=6.3cm]{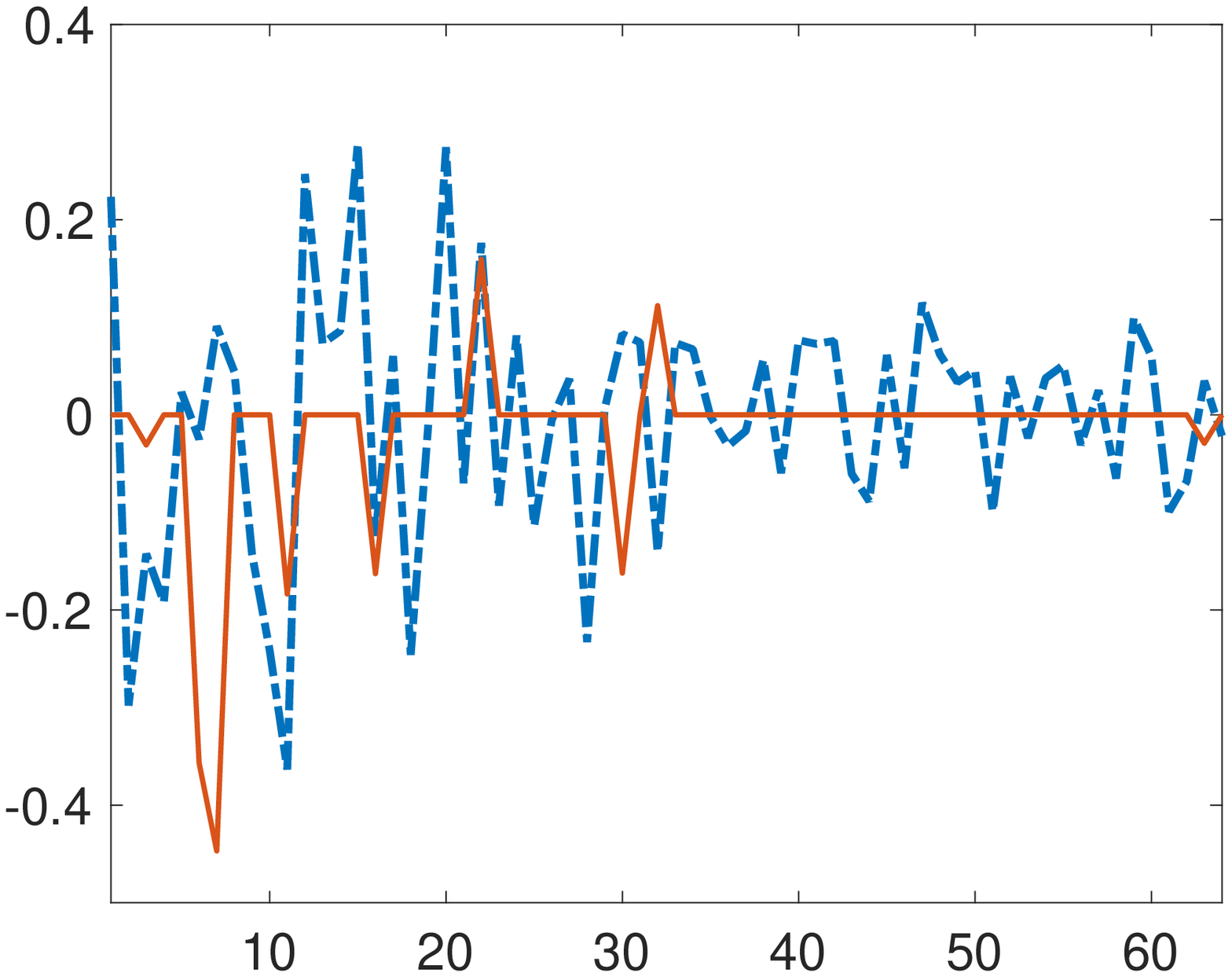}\vspace{-0.2cm}\\
\small{(g)} & \small{(h)}
\end{tabular}
\caption{1D signal deblurring and denoising problem. The right column displays the 1D Haar wavelet coefficients of the signals displayed in the left columns. The first row shows the exact and corrupted signals. The second row shows the best reconstructions obtained by GMRES (dash-dot lines) and FGMRES (solid lines). The third and fourth row show the 2nd and 4th basis vectors computed by GMRES (dash-dot lines) and FGMRES (solid lines), respectively.}
\label{fig:BasisVectors}
\end{figure}

\section{Numerical Results}
\label{sec:numerics}

In this section, we provide three experiments to demonstrate the performance of the flexible Krylov hybrid methods on various examples from image processing.  The first two experiments are examples from image deblurring, where enforcing sparsity in the {image} and sparsity in {the wavelet coefficients} are investigated separately.  The third experiment is interested in tomographic reconstruction from undersampled data.

All experiments were performed in MATLAB 2017a, using codes available in the \emph{Restore Tools} \cite{nagy2004iterative} and \emph{AIR Tools II} \cite{hansen2018air} software packages. In all presented results, relative errors are computed as $\norm[2]{\bfx_{\rm true} - \bfx_k}/\norm[2]{\bfx_{\rm true}}$.

\paragraph{Experiment 1}  In this experiment, we use an image deblurring example from atmospheric imaging where the true image, the point spread function (PSF), and the observed blurred image are provided in Figure~\ref{fig:deblurring1}. For this problem, Gaussian white noise is added to the blurred image, such that the noise level is
{$\|\bfe\|_2/\|\bfb_{\rm true}\|_2 = 5\times 10^{-2}$}, where $\bfb_{\rm true}=\bfb-\bfe$.
\begin{figure}[bthp]
 \begin{center}
   \includegraphics[width=.7\textwidth]{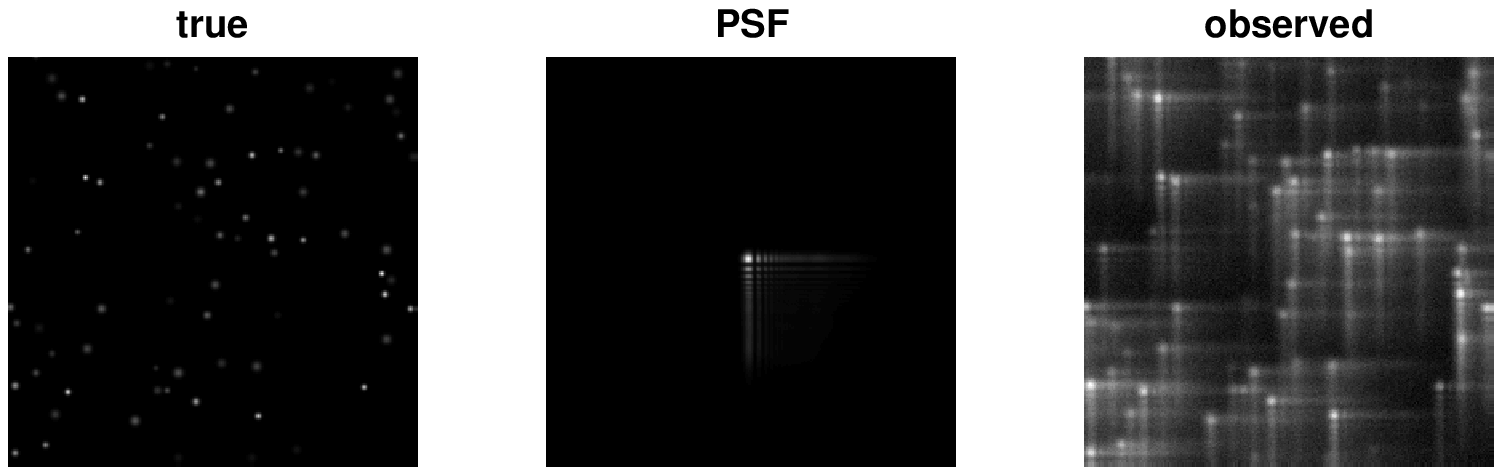}
  \end{center}
  \caption{Experiment 1: Image deblurring example.  Here we show the true image, the point spread function, and the observed blurred and noisy image.  The size of the image is $256 \times 256$ pixels}\label{fig:deblurring1}
\end{figure}

For the reconstructions, we assumed reflexive boundary conditions and solved the $\ell_1$-regularized problem with $\bfPsi = \bfI$, which is appropriate because the desired image is quite sparse.  First we provide a comparison of various Golub-Kahan-based methods.  In Figure~\ref{fig:rel1}, we provide relative errors per iteration for flexible methods described in Section~\ref{sec:flexible}, namely FLSQR, FLSQR-I and FLSQR-R with automatic regularization parameter selection using the ``secant update'' discrepancy principle.  Relative reconstruction errors for LSQR are provided for comparison.  Similarly to the observations made in Section~\ref{sec:flexible}, the flexible methods exhibit faster convergence to more accurate solutions than the standard approach. Furthermore, we see that the flexible hybrid methods are able to stabilize semiconvergent behavior by selecting an appropriate regularization parameter, and the stopping criterion (still based on the ``secant update'' strategy) seems to work well.
\begin{figure}[bthp]
 \begin{center}
   \includegraphics[width=.7\textwidth]{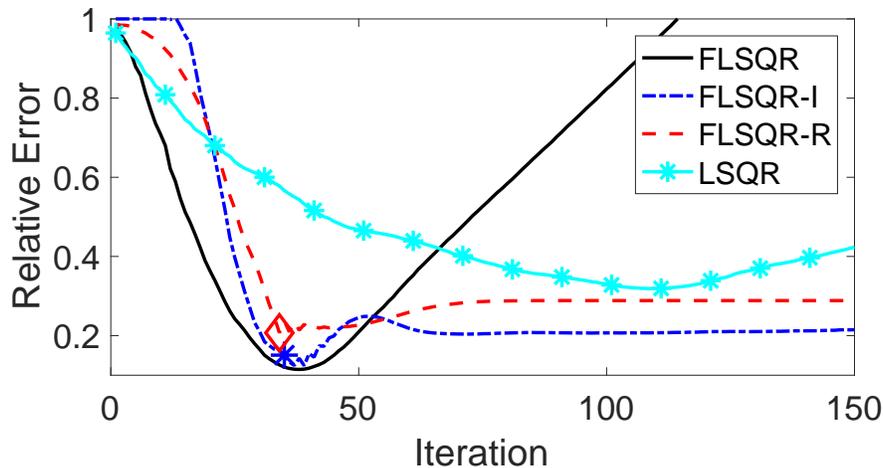}
  \end{center}
  \caption{Experiment 1: Comparison of relative reconstruction errors.  Regularization parameter $\lambda$ is selected automatically using the modified discrepancy principle for FLSQR-I and FLSQR-R. $\lambda=0$ for FLSQR and LSQR. Automatically determined stopping iterations for the hybrid approaches are denoted by the diamond and star.}\label{fig:rel1}
\end{figure}

In Figure~\ref{fig:basis}, we provide the basis images for FLSQR-R and LSQR for \linebreak[4]$k = 10, 20, 100$. Note that basis images for FLSQR-R correspond to the {FGK} vectors, while the LSQR ones correspond to the standard {GKB} vectors.
\begin{figure}[bthp]
 \begin{center}
   \includegraphics[width=.7\textwidth]{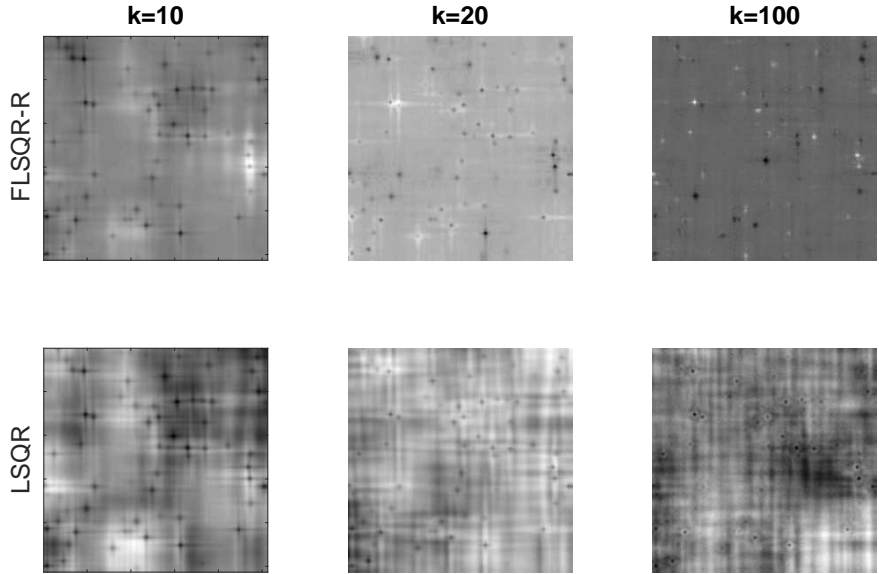}
  \end{center}
  \caption{Experiment 1: Basis images for FLSQR-R and LSQR for $k = 10, 20, 100$.  These are solution vectors (i.e., $\bfz_k$ for FLSQR-R) that have been reshaped into images.}\label{fig:basis}
\end{figure}
It is evident that the basis images for the flexible method are able to better capture the flat regions of the image.  Also, for large $k,$ the FLSQR-R basis {image} is not affected by the noise amplification that is present in the LSQR basis image.  Thus, we expect that by constructing a better solution basis (i.e., one that is not as affected by noise and one that can capture sparsity properties of the image), the flexible methods can be successful for sparse image reconstruction.

Next, we investigate some parameter choice methods. In Figure~\ref{fig:rel_param}, we provide relative reconstruction errors for FLSQR-R and `FLSQR-R dp'.  Both of these methods use the discrepancy principle to obtain the regularization parameter, where FLSQR-R utilizes the ``secant update'' parameter choice method described in~\cite{gazzola2014generalized} {and `FLSQR-R dp' enforces the discrepancy principle to be satisfied at each iteration}, which require prior knowledge of the noise level. Relative errors for `FLSQR-R opt' correspond to selecting the regularization parameter at each iteration that minimizes the error of the current iterate to the true solution.  It is worth noting that, since the basis vectors are generated based on the current solution (because of flexibility), this approach does not necessarily produce the best overall regularization parameter for the problem.
\begin{figure}[bthp]
 \begin{center}
   \includegraphics[width=.7\textwidth]{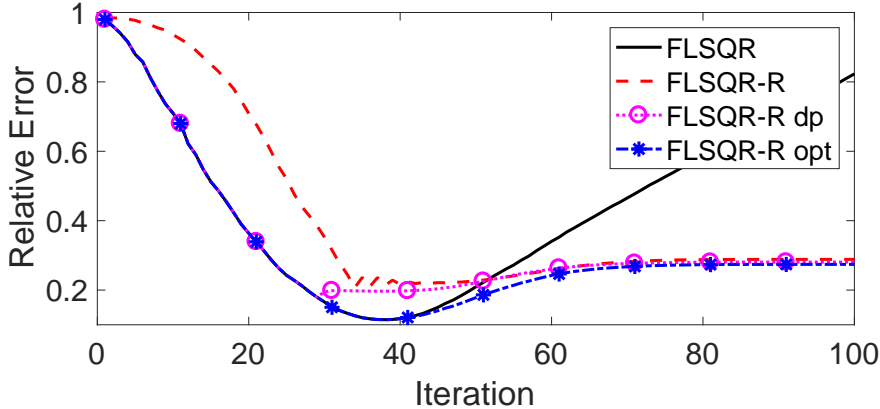}
  \end{center}
  \caption{Experiment 1: Relative reconstruction errors for different parameter choice methods.  FLSQR-R and FLSQR-R dp use the discrepancy principle, and thus require an estimate of the noise level. FLSQR-R uses a modified discrepancy principle. FLSQR-R opt corresponds to selecting the optimal regularization parameter at each iteration{, which is not necessarily the overall best parameter because of flexibility}.}\label{fig:rel_param}
\end{figure}

Finally, we compare our approach to other methods for solving the $\ell_1$-regularized problem.  In Figure~\ref{fig:rel2}, we provide relative reconstruction errors for GAT \cite{gazzola2014generalized}, {PIRN}, FISTA \cite{beck2009fast}, and SpaRSA \cite{wright2009sparse}, with FLSQR-R provided from Figure~\ref{fig:rel1} for comparison.  Since the regularization parameter for {PIRN,} FISTA, and SpaRSA must be selected prior to execution, we used the regularization parameter that was selected by FLSQR-R when the stopping criterion was satisfied.  We note that both FISTA and SpaRSA compute reconstructions with similar or slightly better accuracy than FLSQR-R, but two main advantages of the hybrid approaches are that the regularization parameter can be selected automatically, and the reconstruction can be obtained in fewer iterations.  The main cost per iteration for all of these methods is one matrix-vector multiplication with $\bfA$ and one with $\bfA\t$.

\begin{figure}[bthp]
 \begin{center}
   \includegraphics[width=.7\textwidth]{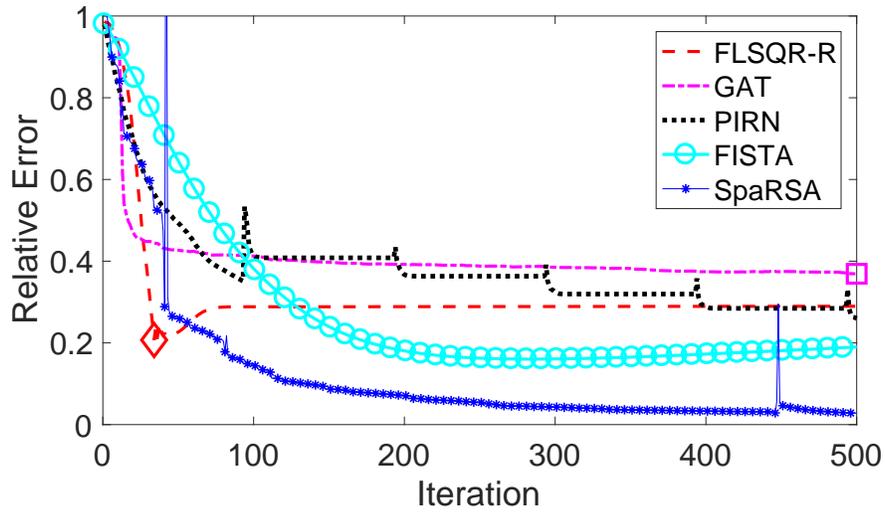}
  \end{center}
  \caption{Experiment 1: Relative reconstruction errors are provided to compare the flexible GK methods to some previously proposed methods.  It is important to note that we have selected the PIRN, FISTA, and SpaRSA regularization parameter using our FLSQR-R approach.}\label{fig:rel2}
\end{figure}

\paragraph{Experiment 2}
In this experiment, we investigate the transformed $\ell_1$-regularized problem for an image deblurring example.  For this problem, we use the cameraman image shown in Figure~\ref{fig:Ex2_deblur}, where out of focus blur of radius 4 and {Gaussian white noise with} noise level $0.01$ {are considered}.
Although a wide range of transformations $\bfPsi$ can be employed, for simplicity we used a 2D Haar wavelet decomposition with 3 levels.  For this example, the images contain $256 \times 256$ pixels and the image itself is not sparse (having only $27$ zero pixels). However, the transformed true image (also provided in Figure~\ref{fig:Ex2_deblur}) has $1502$ zero pixels and thus it is appropriate to consider the transformed $\ell_1$-regularized problem.
\begin{figure}[bthp]
\begin{center}
  \includegraphics[width=.7\textwidth]{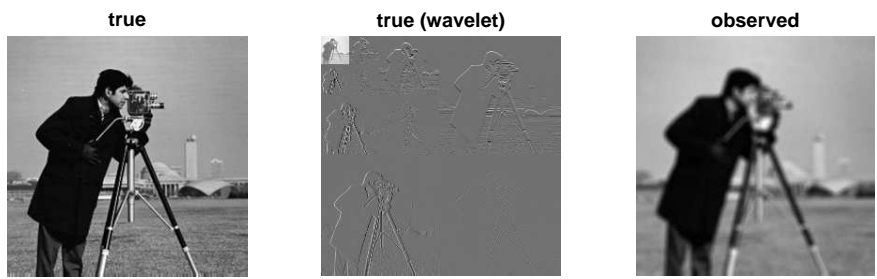}
 \end{center}
 \caption{Experiment 2: Image deblurring example.  Here we show the true image, the wavelet coefficients of the true image, and the observed image.}\label{fig:Ex2_deblur}
\end{figure}

First we investigate the Golub-Kahan-based methods.  In Figure~\ref{fig:Ex2_rel1}, we provide the relative reconstruction errors for FLSQR, FLSQR-I, and FLSQR-R, where LSQR on the original problem is provided for comparison.  In terms of relative error, the flexible methods take a few more iterations and provide slightly smaller reconstruction errors, but the difference is more pronounced in the reconstructions.  Subimages of the reconstructions are provided in Figure~\ref{fig:Ex2_recon}, along with the error images.  We observe that the flexible methods are able to better capture the flat regions of the image.
\begin{figure}[bthp]
 \begin{center}
   \includegraphics[width=.7\textwidth]{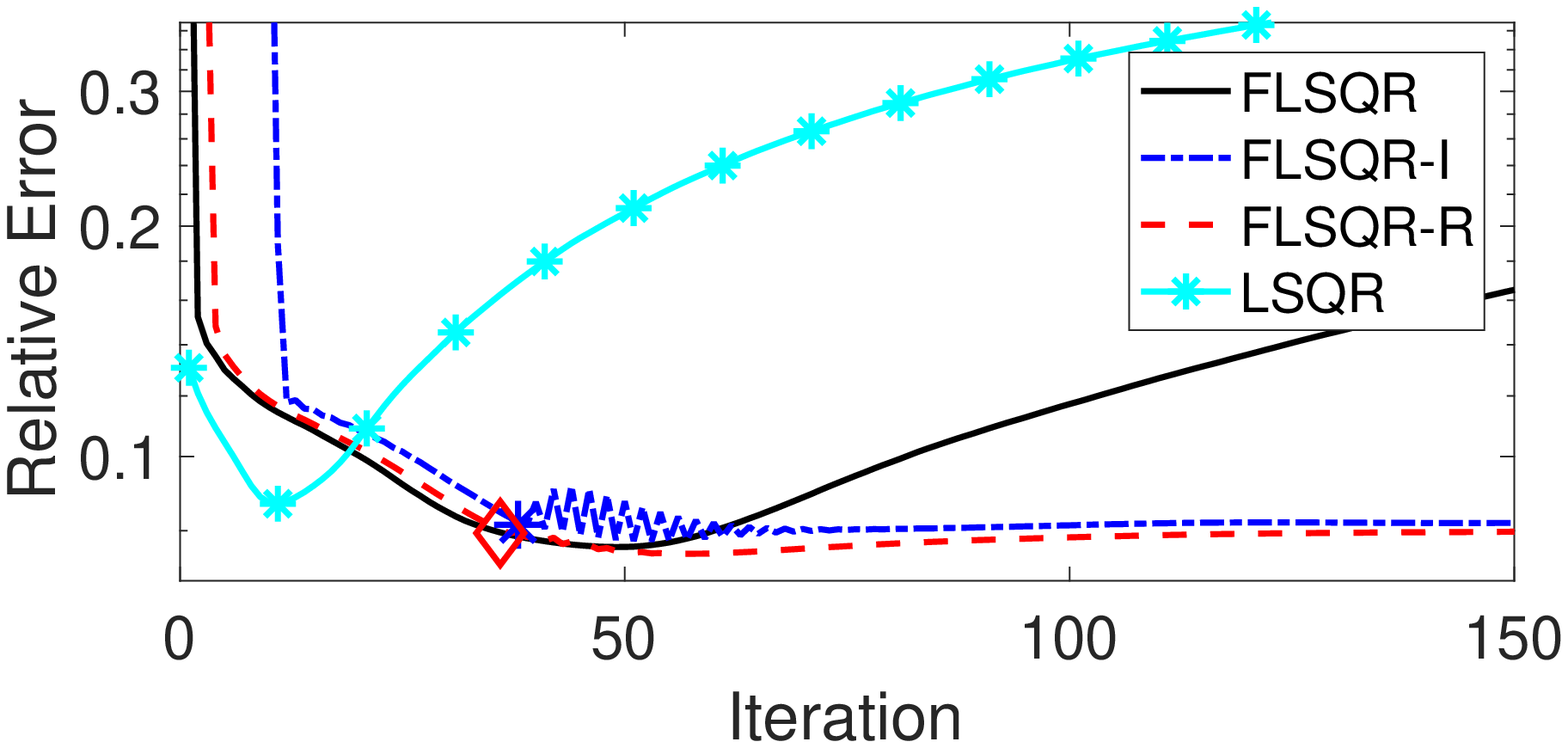}
  \end{center}
  \caption{Experiment 2: Relative reconstruction errors for Golub-Kahan-based approaches.  Regularization parameter $\lambda$ is selected automatically using the discrepancy principle for FLSQR-I and FLSQR-R. $\lambda=0$ for FLSQR and LSQR.}
  \label{fig:Ex2_rel1}
\end{figure}

\begin{figure}[bthp]
 \begin{center}
   \includegraphics[width=\textwidth]{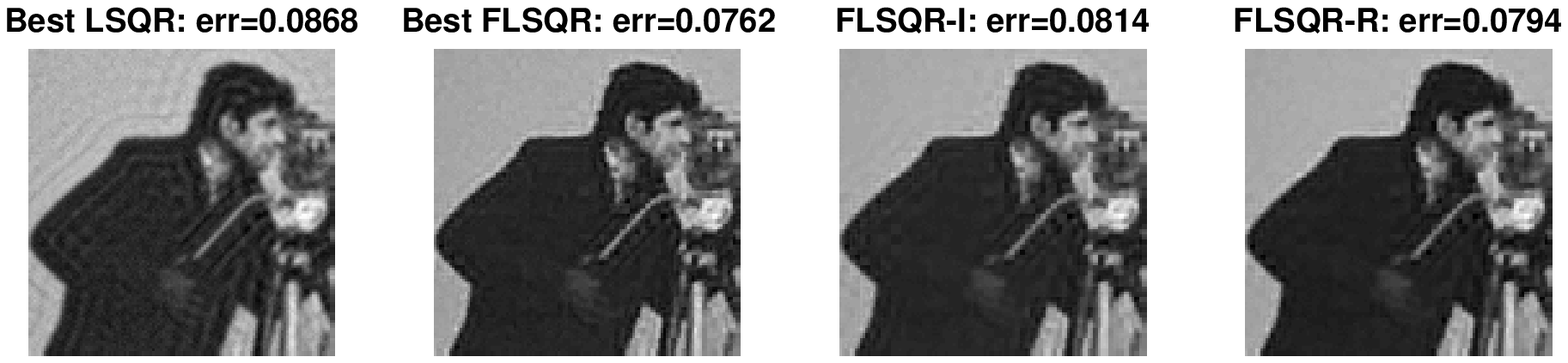}\\
      \includegraphics[width=\textwidth]{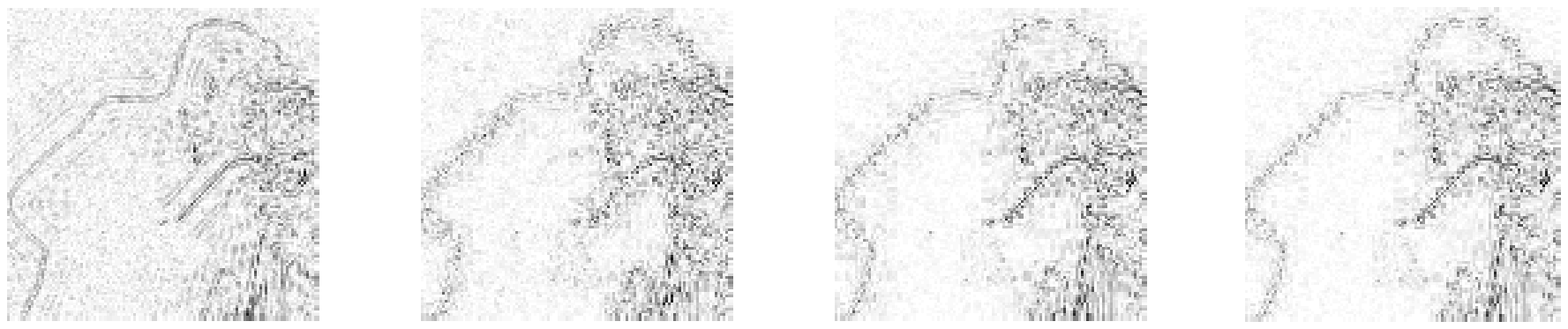}
  \end{center}
  \caption{Experiment 2: Sub-images of reconstructions for Golub-Kahan-based methods, along with absolute error images in inverted colormap (where white corresponds to small error), are provided for LSQR, FLSQR, FLSQR-I, and FLSQR-R. Relative reconstruction errors for the entire image are provided in the titles.}
  \label{fig:Ex2_recon}
  \end{figure}

Next we compare FLSQR-R to the GAT method applied to the transformed problem, as well as to FISTA on the transformed problem, with the regularization parameter computed from FLSQR-R.  Here, the computed parameter is $2.4\times 10^{-2}$ and is too small.  Thus, we also provide in `FISTA opt' the results for FISTA with the optimal regularization parameter $0.1,$ which was determined by searching over $10$ logarithmically spaced values between $10^{-3}$ and $1$, and selecting the one with the smallest reconstruction error.  We observe that only for a good choice of the regularization parameter FISTA reconstructions are similar to ours while, for poor choices of the regularization parameter, FISTA reconstructions are either too blocky or contaminated with noise.

\begin{figure}[bthp]
 \begin{center}
   \includegraphics[width=.7\textwidth]{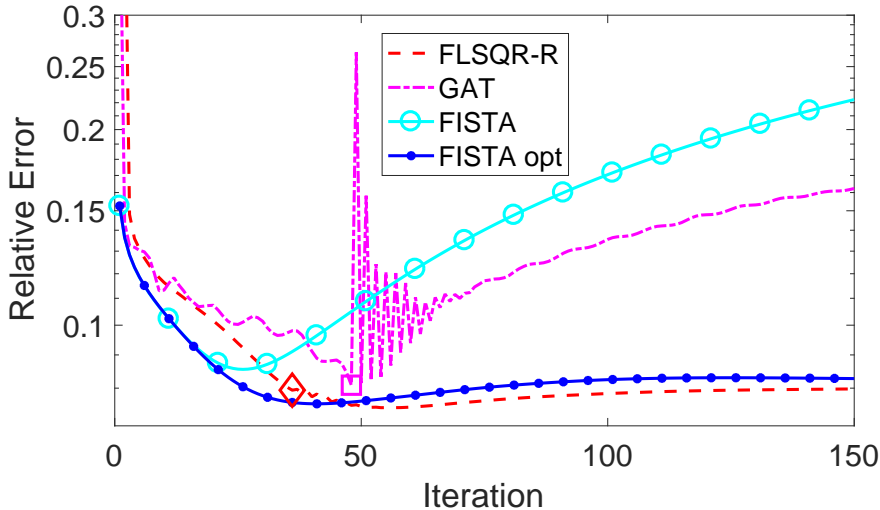}
  \end{center}
  \caption{Experiment 2: Relative reconstruction errors are provided to compare the flexible GK methods to some {existing methods.  FISTA uses the regularization parameter  selected by FLSQR-R, and FISTA opt uses a regularization parameter that was found empirically using the true image.}}
\end{figure}

\paragraph{Experiment 3}
We consider a sparse X-ray tomographic reconstruction example, with undersampled data. The goal of this experiment is to assess how the new solvers based on the FGK decomposition perform for the solution of the transformed {$\ell_1$-regularized problem} (\ref{eq:pnormAb})
where
$\bfA$ is underdetermined and $\bfPsi$ represents 2D Haar wavelet transform with 4 levels. In \cite{CSCT} it has been empirically shown that the compressive sensing theory applies when performing standard structured undersampling patterns
and when solving either the $\ell_1$ or the total variation regularized problems. The test problem considered here is generated using the \texttt{paralleltomo} function from \emph{AIR Tools II} \cite{hansen2018air}, which models a 2D equidistant parallel-beam scanning geometry, with the following parameters:
\[
\text{\texttt{N = 256, theta = 0:2:179, p = round(sqrt(2)*N), d = sqrt(2)*N} .}
\]
This computes a fairly underdetermined sparse matrix $\bfA$ of size $32580\times 65536$ (which roughly corresponds to 50\% undersampling). The exact solution $\bfx$ is a vectorialization of the well-known Shepp-Logan phantom of size $256\times 256$ pixels. The transformed exact solution $\bfPsi\bfx$ only has $27492$ nonzero entries (which roughly corresponds to 60\% sparsity). Note that, with such undersampling and sparsity, and according to \cite{CSCT}, recovery should be experimentally guaranteed. Gaussian white noise of level $10^{-2}$ is added to the exact data.

Figure \ref{fig:ex3_relerr_iterative} displays the history of relative errors associated to different purely iterative regularization methods (i.e., with $\lambda=0$ in (\ref{eq:transformed}){)}: since we are dealing with a rectangular matrix, only LSQR and LSMR together with their  flexible versions are considered. We can clearly see the benefits of introducing flexibility into the solution subspaces: indeed, a greater accuracy is achieved by the flexible methods (with a computational cost comparable to the standard solvers), together with a less pronounced semiconvergence (this is particularly true for FLSMR, in accordance to the observations in \cite{chung2015hybrid}).
\begin{figure}
\includegraphics[width=\textwidth]{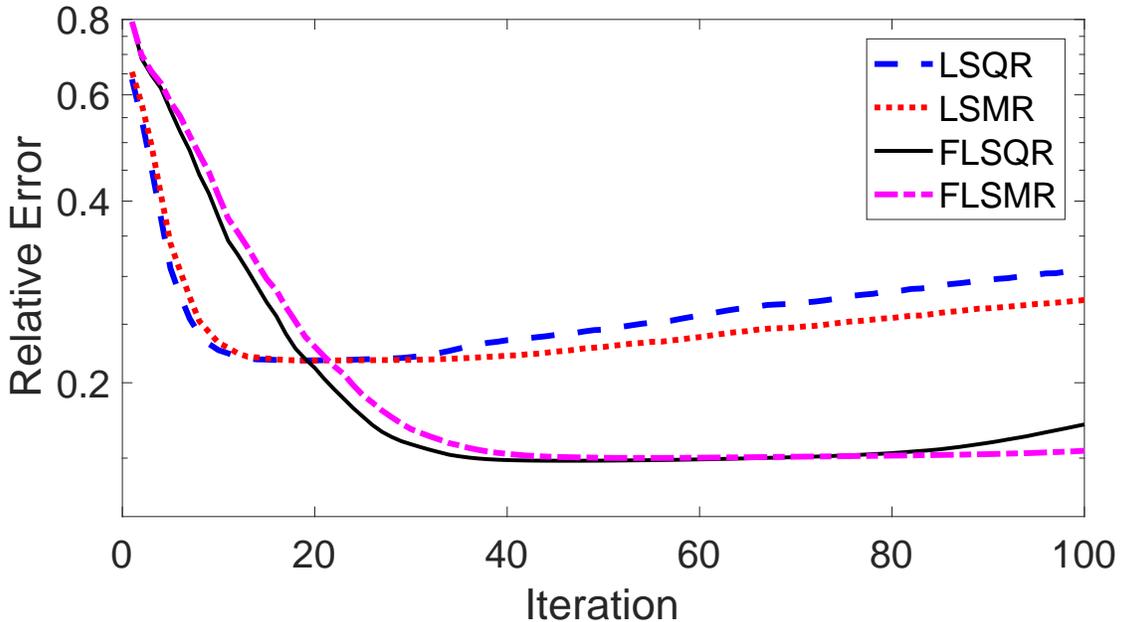}
\caption{Experiment 3: History of the relative errors, considering purely iterative methods.}\label{fig:ex3_relerr_iterative}
\end{figure}
Figure \ref{fig:ex3_relerr_hybrid} displays the history of the relative errors when the {`FLSQR-I dp'} method is employed (with the regularization parameter chosen at each iteration by the discrepancy principle), and compares it to other solvers for (\ref{eq:transformed}). In particular, we run FISTA with a ``standard'' stepsize choice (i.e., the stepsize is chosen as the Lipschitz constant $\sigma_1^{-2}$, which is estimated by running a few GKB iterations); correspondingly, the threshold is set to $\lambda\cdot\sigma_1^{-2}$. {Finally, we compare with SpaRSA, IRN, and PIRN}. As already remarked, all these well-established solvers used for comparisons require the regularization parameter $\lambda$ to be set at the beginning of the iterative process: for this experiment we choose $\lambda=4.2\cdot 10^{-5}$, which is the value computed by the classical discrepancy principle at the end of the {`FLSQR-I dp'} iterations (when also some stabilization occurred in the iteration-dependent values of the regularization parameter). We can clearly see that SpaRSA does not perform well for this problem, and that FISTA rapidly stagnates and computes solutions of lower quality with respect to the {`FLSQR-I dp'} ones. PIRN seems to be the method performing better in terms of relative errors (requiring anyway more iterations than {`FLSQR-I dp'} to reach an optimal accuracy), and it surely outperforms IRN, which is not so effective because of the small $\lambda$ considered in this framework. We do not show the behavior of the FLSQR-R, FLSMR-I, and FLSMR-R {hybrid} methods as they are very similar to the FLSQR-I method for this problem.
\begin{figure}
\includegraphics[width=\textwidth]{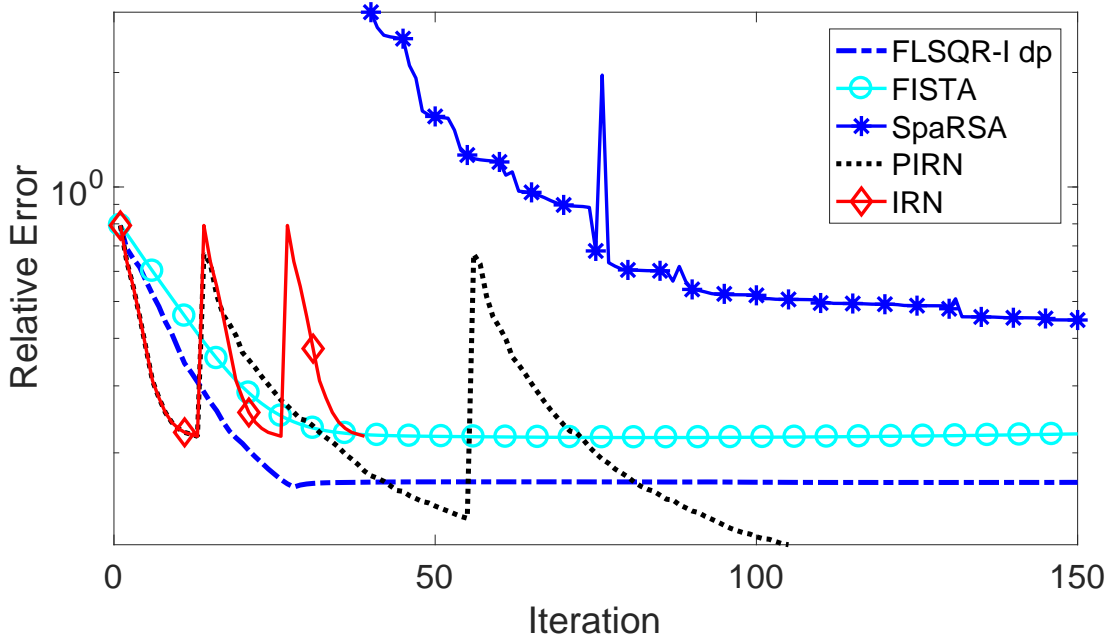}
\caption{Experiment 3: History of the relative errors, comparing the FLSQR-I method to FISTA, SpaRSA, the standard IRN, and the preconditioned PIRN.}\label{fig:ex3_relerr_hybrid}
\end{figure}
\begin{figure}[tbp]
\centering
\begin{tabular}{ccc}
\hspace{-0.7cm}\small{\textbf{exact}} &
\hspace{-0.7cm}\small{\textbf{FLSQR-I}} &
\hspace{-0.7cm}\small{\textbf{FISTA}}\\
\hspace{-0.7cm} &
\hspace{-0.7cm}\small{(0.1626, \# 28)}&
\hspace{-0.7cm}\small{(0.2194, \# 82)}\vspace{-0.05cm}\\
\hspace{-0.7cm}\includegraphics[width=4.5cm]{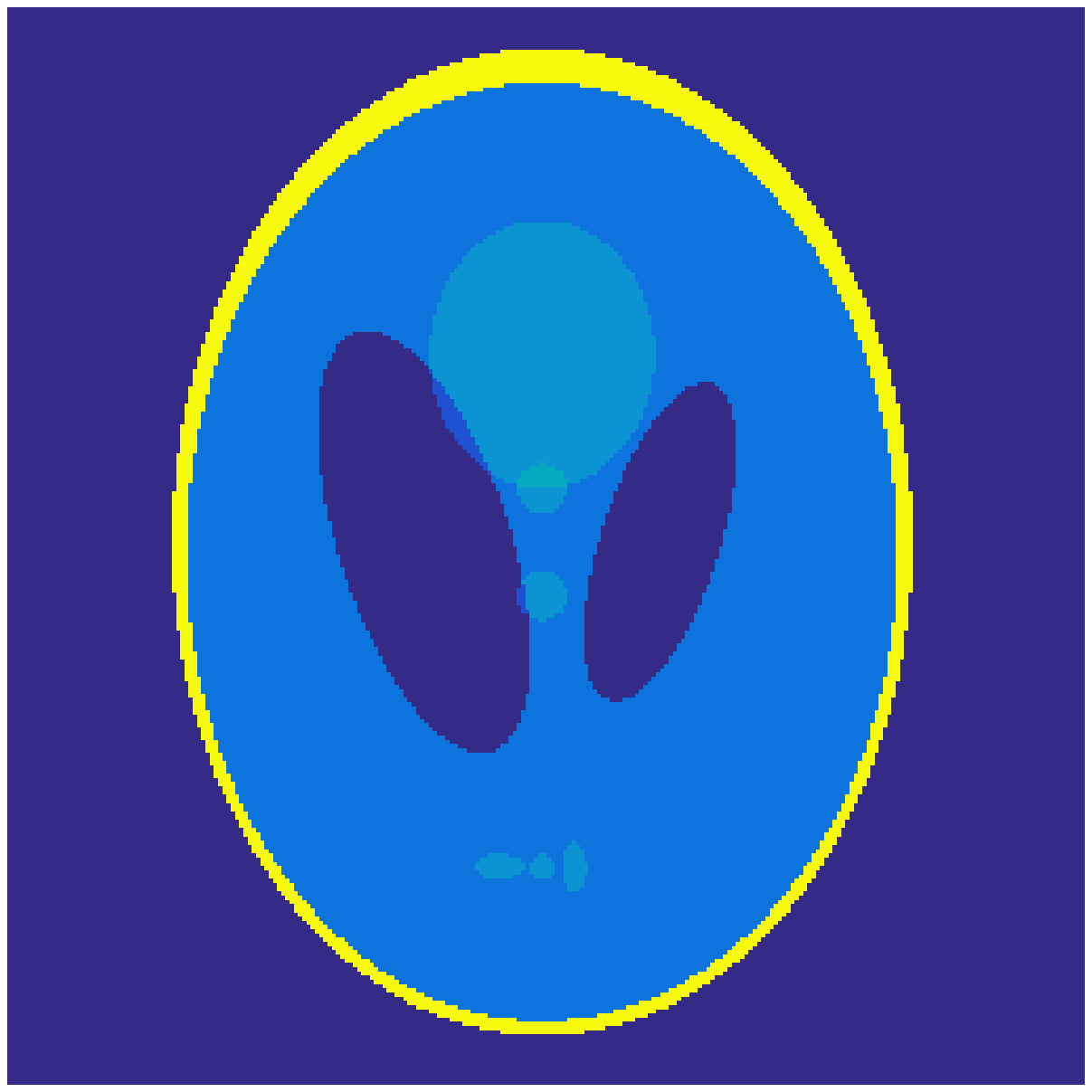} &
\hspace{-0.7cm}\includegraphics[width=4.5cm]{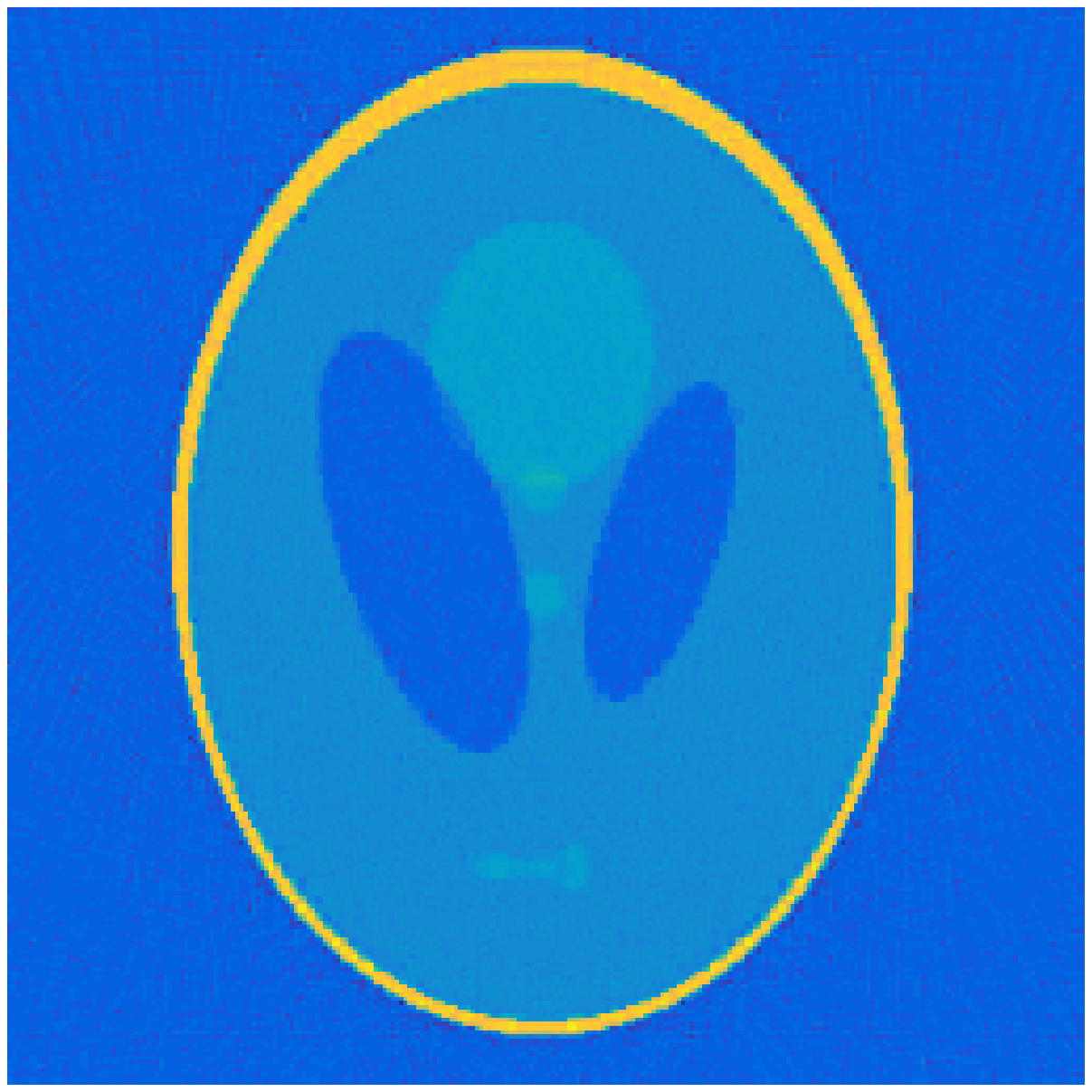} &
\hspace{-0.7cm}\includegraphics[width=4.5cm]{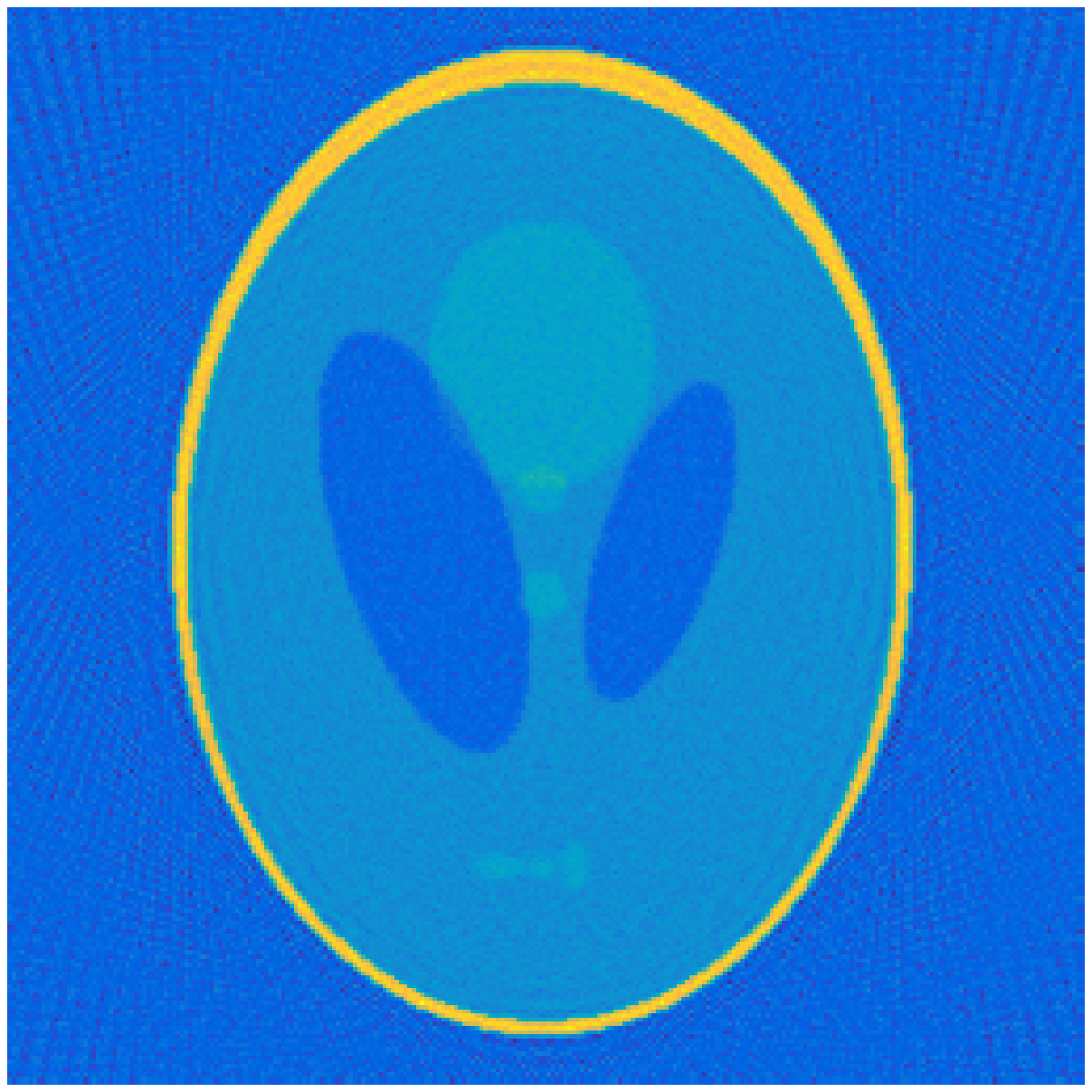}\vspace{-0.2cm}\\
\hspace{-0.7cm}\small{\textbf{SpaRSA}} &
\hspace{-0.7cm}\small{\textbf{IRN}} &
\hspace{-0.7cm}\small{\textbf{PIRN}}\\
\hspace{-0.7cm}\small{(0.4467, \# 150)} &
\hspace{-0.7cm}\small{(0.2211, \# 26)} &
\hspace{-0.7cm}\small{(0.1150, \# 105)}\vspace{-0.05cm}\\
\hspace{-0.7cm}\includegraphics[width=4.5cm]{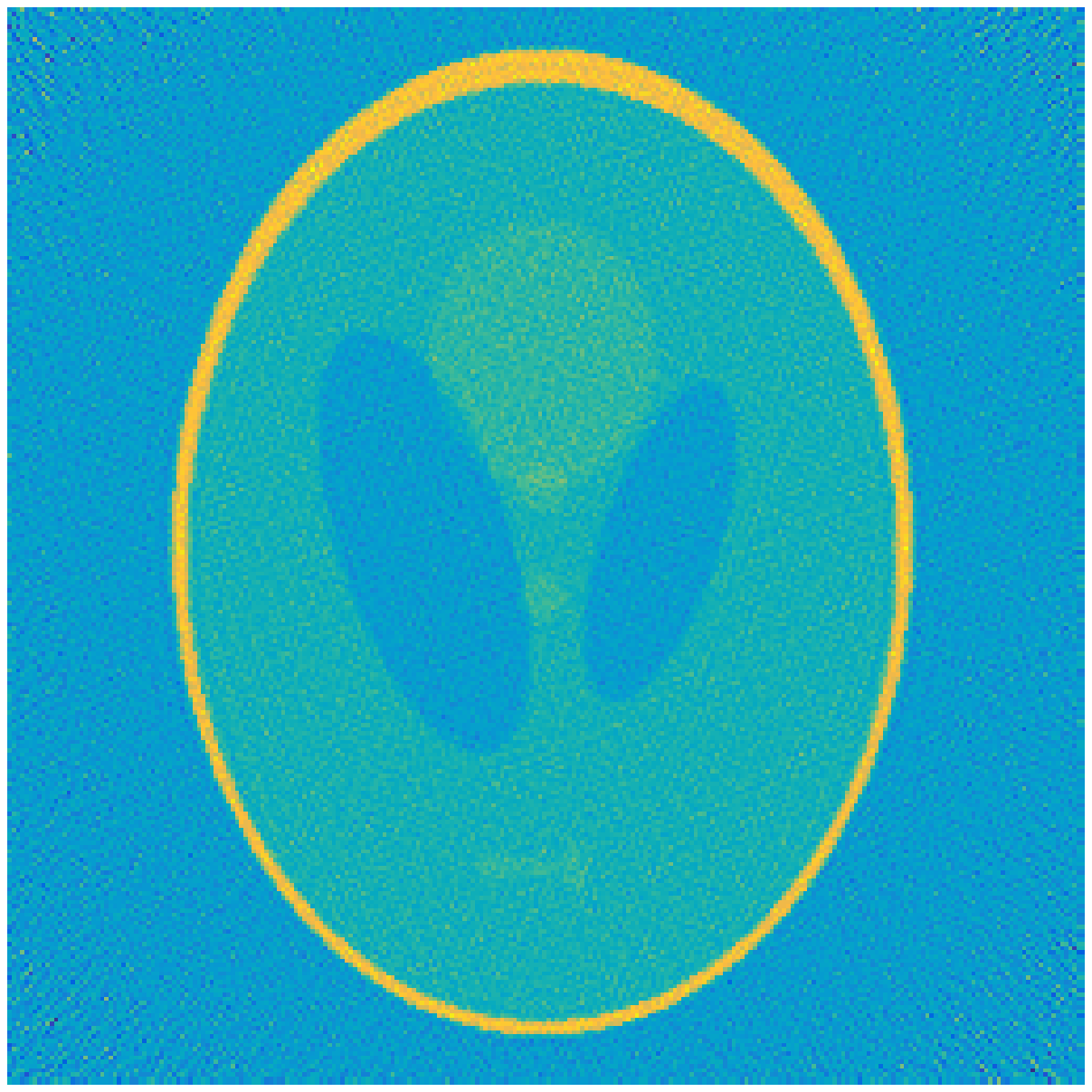} &
\hspace{-0.7cm}\includegraphics[width=4.5cm]{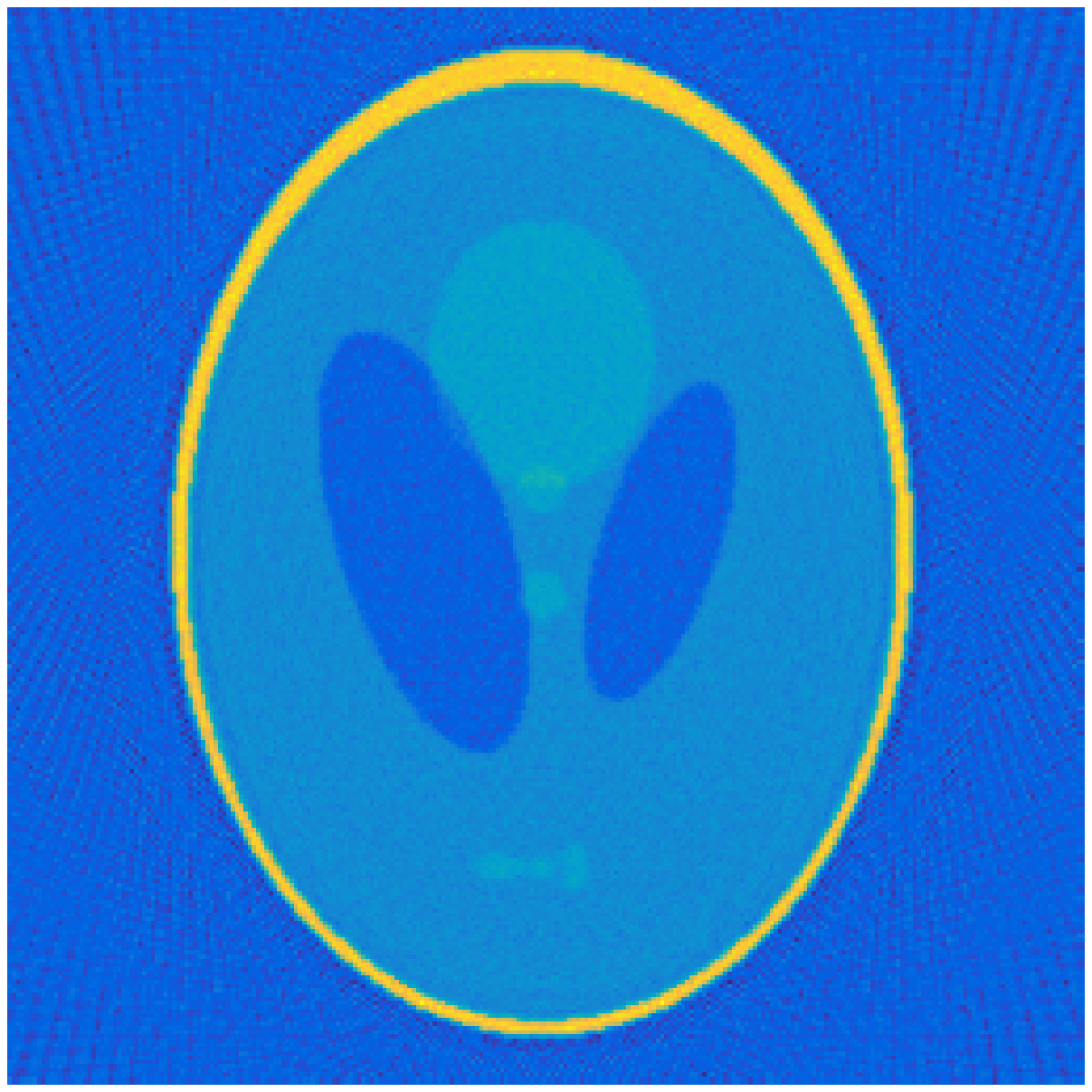} &
\hspace{-0.7cm}\includegraphics[width=4.5cm]{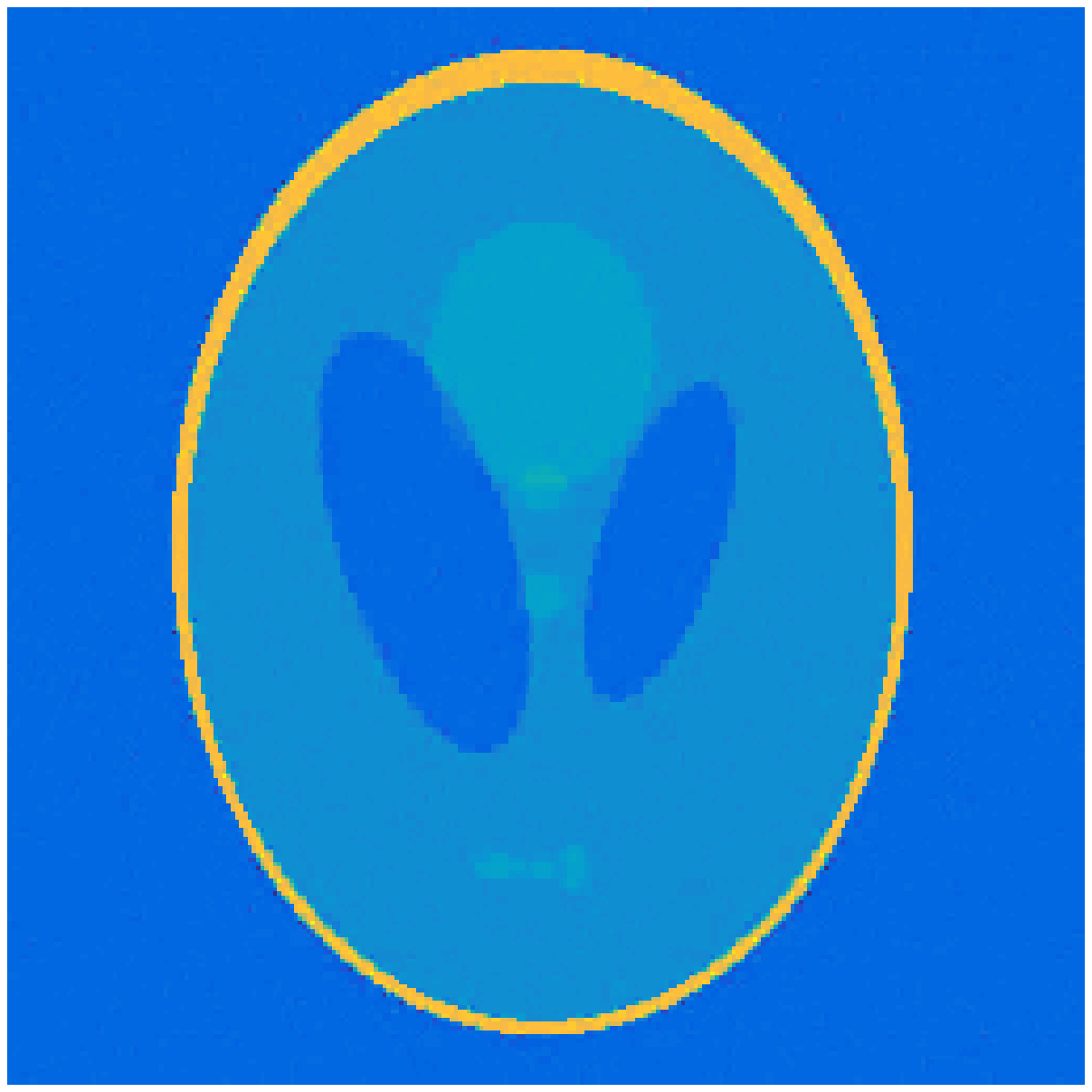}
\end{tabular}
\caption{Experiment 3: Best reconstructions computed by various solvers.}
\label{fig:exp3_sol}
\end{figure}
Figure \ref{fig:exp3_sol} shows the best reconstructions computed by each method considered in Figure \ref{fig:ex3_relerr_hybrid}. The best relative error and the iteration number (preceded by \#) are reported in brackets.

\section{Conclusions and Future Work}
\label{sec:conclusions}
In this paper, we described flexible hybrid iterative methods for computing approximate solutions to the (transformed) $\ell_p$-regularized problem, for $p\geq 1$.
To handle general (non-square) $\ell_p$-regularized least-squares problems, we introduced a flexible Golub-Kahan approach and exploited it within a Krylov-Tikhonov hybrid framework.
Theoretical results showed that the iterates correspond to solutions of a full-dimensional Tikhonov-like problem that has been projected onto a flexible Krylov subspace of increasing dimension.  We described various extensions for efficiently computing solutions that are sparse with respect to some invertible transformation.  Our proposed methods are \emph{efficient} in that they are matrix-free and avoid inner-outer schemes, and \emph{automatic} in that parameters such as regularization parameters and stopping iterations can be naturally selected within a hybrid framework.  Numerical results validate these observations.

Future work includes extensions to problems where $\bfPsi$ is not invertible, and also to nonlinear regularization functionals (e.g., total variation) and nonconvex problems.  Furthermore, by incorporating a multi-level decomposition, these flexible hybrid methods can be exploited in a multi-parameter regularization framework, where a different sparsity regularization parameter is incorporated for each level.

\section*{Acknowledgments}
This work was partially supported by NSF DMS 1654175 and NSF DMS 1723005 (J.~Chung). The authors would like to thank the Isaac Newton Institute for Mathematical Sciences for support and hospitality during the programme ``Variational methods and effective algorithms for imaging and vision'' when work on this paper was undertaken. This work was supported by: EPSRC grant numbers EP/K032208/1 and EP/R014604/1

\bibliographystyle{siamplain}
\bibliography{flexGMRES}

\end{document}